\newcommand\N{{\mathbb N}}
\newcommand\R{{\mathbb R}}
\newcommand\C{{\mathbb C}}
\def\AA{{\mathcal A}}
\def\BB{{\mathcal B}}
\def\EE{{\mathcal E}}
\def\FF{{\mathcal F}}
\def\LL{{\mathcal L}}
\def\RR{{\mathcal R}}
\def\SS{{\mathcal S}}
\def\BBB{{\mathscr B}}
\def\CCC{{\mathscr C}}
\newtheorem{theo}{Theorem}
\newtheorem{lem}[theo]{Lemma}
\newtheorem{cor}[theo]{Corollary}
\newtheorem{rem}[theo]{Remark}
\newcommand{\beqn}{\begin{equation}}
\newcommand{\eeqn}{\end{equation}}
\newcommand{\bear}{\begin{eqnarray}}
\newcommand{\eear}{\end{eqnarray}}
\newcommand{\bean}{\begin{eqnarray*}}
\newcommand{\eean}{\end{eqnarray*}}
\title{Fractional Fokker-Planck equation}
\begin{document}

\author{{\sc Isabelle Tristani}}
\address{CEREMADE, Universit\'e Paris IX-Dauphine,
Place du Mar\'echal de Lattre de Tassigny, 75775 Paris Cedex 16, France. 
E-mail: {\tt tristani@ceremade.dauphine.fr }}

\date\today

\maketitle

\begin{abstract}
This paper deals with the long time behavior of solutions to a ``fractional Fokker-Planck" equation of the form $\partial_t f = I[f] + \text{div}(xf)$ where the operator $I$ stands for a fractional Laplacian. We prove an exponential in time convergence towards equilibrium in new spaces. Indeed, such a result was already obtained in a $L^2$ space with a weight prescribed by the equilibrium in~\cite{GI}. We improve this result obtaining the convergence in a $L^1$ space with a polynomial weight. To do that, we take advantage of the recent paper~\cite{GMM} in which an abstract theory of enlargement of the functional space of the semigroup decay is developed. 
\end{abstract}

\vspace{0.5cm}

\textbf{Mathematics Subject Classication (2010):} 47G20 Integro-differential operators; 35B40 Asymptotic behavior of solutions; 35Q84 Fokker-Planck equations.

\vspace{0.3cm}
\textbf{Keywords:} Fractional Laplacian; Fokker-Planck equation; spectral gap; exponential rate of convergence; long-time asymptotic.

\vspace{0.3cm}
\tableofcontents
\newpage
%%%%%%%%%%%%%%%%%%%%%%%%%%%%%%%%%%%%%%%%%%%%%%%%%%%%%%%%%%%%%%%%

\section{Introduction} 
\label{sec:intro}
\setcounter{equation}{0}
\setcounter{theo}{0}

%%%%%%%%%%%%%%%%%%%%%%%%%%%%%%%%%%%%%%%%%%%%%%%%%%%%%%%%%%%%%%%

\subsection{Model and main result}

For $\alpha \in (0,2)$, we consider the following generalization of the  Fokker-Planck equation:

\beqn \label{eq:FP}
\partial_t f = -\left( - \Delta \right)^{\alpha/2} f + \text{div}(xf) , \quad \text{in} \,\, \R^d
\eeqn
with an initial data $f_0$. In the sequel, we will use the shorthand notations 
$$
I[f] =  -\left( - \Delta \right)^{\alpha/2} f \quad \text{and} \quad \LL f = I[f] + \text{div} (xf).
$$ 
The operator $(-\Delta)^{\alpha/2}$ is a fractional Laplacian, we first define it on the space of Schwartz functions $\SS(\R^d)$ and we then extend the definition to others functions. We refer to Section~\ref{sec:LF} for the exact definition and for properties. 

We also define here weighted $L^p$ spaces in the following way: for some given Borel weight function $m\ge0$ on $\R^d$, let us define $L^p(m)$, $1 \le p \le +\infty$, as the Lebesgue space associated to the norm 
$$
\| h \|_{L^p(m)} = \| h \, m \|_{L^p}.
$$

We now state our main result concerning the equation~(\ref{eq:FP}). 
\begin{theo} \label{th:main}
Let us consider $k \in (0,\alpha)$. For any $a \in (- \min(\lambda,k),0)$ (where $\lambda>0$ will be defined in Corollary~\ref{cor:entropie})  and for any initial data $f_0 \in L^1(\langle x \rangle^k)$, the solution~$f(t)$ of the equation~(\ref{eq:FP}) satisfies the following decay:
$$
\| f(t) - \mu \langle f_0 \rangle \|_{L^1(\langle x \rangle^k)} \leq C_a e^{at} \|f_0 - \mu \langle f_0 \rangle \|_{L^1(\langle x \rangle^k)}
$$
where $\langle f_0 \rangle = \int_{\R^d} f_0$  and for some constant $C_a>0$. 
\end{theo}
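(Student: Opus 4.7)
The plan is to deduce Theorem~\ref{th:main} from the abstract enlargement-of-functional-space theorem of~\cite{GMM}, using the exponential decay of $S_\LL(t)$ in the small space $E := L^2(\mu^{-1/2})$ provided by~\cite{GI} as input. The goal is thus to factor $\LL = \AA + \BB$ in such a way that $\BB$ is hypodissipative in the enlarged space $\EE := L^1(\langle x \rangle^k)$ with the desired rate $a$, and $\AA$ is regularizing enough to feed, after finitely many iterations of the Duhamel formula, into $E$.

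\textbf{Step 1 (Splitting).} I would set $\AA := M\chi_R$, multiplication by a smooth compactly supported cutoff with amplitude $M$ and radius $R$ to be tuned, and $\BB := \LL - \AA$.

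\textbf{Step 2 (Dissipativity of $\BB$ in $\EE$).} Along $\partial_t f = \BB f$, I would compute
\begin{equation*}
\frac{d}{dt}\int_{\R^d}|f|\langle x\rangle^k\,dx = \int_{\R^d}\text{sign}(f)\,\BB f\,\langle x\rangle^k\,dx.
\end{equation*}
The drift contribution, after integrating by parts and using $\text{sign}(f)\nabla f = \nabla |f|$, gives $-k\int |f|\,|x|^2\langle x\rangle^{k-2}dx$, which tends to $-k\int |f|\langle x\rangle^k dx$ at infinity. For the nonlocal part, I would invoke the Kato-type inequality $\text{sign}(f)(-\Delta)^{\alpha/2}f \geq (-\Delta)^{\alpha/2}|f|$ together with the self-adjointness of $(-\Delta)^{\alpha/2}$ to obtain
\begin{equation*}
\int_{\R^d} I[f]\,\text{sign}(f)\,\langle x\rangle^k\,dx \;\leq\; \int_{\R^d} |f|\, I[\langle x\rangle^k]\,dx,
\end{equation*}
and then use the pointwise asymptotics $I[\langle x\rangle^k](x) = O(\langle x\rangle^{k-\alpha})$ as $|x|\to\infty$ (where the hypothesis $k<\alpha$ is essential to ensure the singular integral defining $I$ applied to $\langle x\rangle^k$ is convergent and subquadratic relative to the weight). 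Combining the three terms, I would obtain the pointwise bound $\text{sign}(f)\,\BB f\,\langle x\rangle^k \leq (-k + o(1))|f|\langle x\rangle^k$ for $|x|\geq R$, and the term $-M\chi_R|f|\langle x\rangle^k$ dominates on $B_R$ once $M$ is large. Choosing $R$ and $M$ appropriately, $\BB$ is thus hypodissipative in $\EE$ with any prescribed rate $a\in(-k,0)$.

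\textbf{Step 3 (Regularization and iterated Duhamel).} Since $\AA$ is multiplication by a bounded, smooth, compactly supported function, $\AA : \EE \to L^1\cap L^\infty$ with compact support. Using the fractional smoothing effect of the semigroup $S_\BB$ (heuristically a gain of $\alpha/2$ derivatives per unit time, obtainable through standard Fourier/energy arguments for the free fractional heat equation perturbed by a dissipative transport and a bounded potential), I would prove that for $n$ large enough the convolution $(\AA S_\BB)^{(*n)}(t)$ maps $\EE$ into $E$ with an exponential decay of rate $a$, by iterating the gain in integrability and the compactness of the support.

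\textbf{Step 4 (Conclusion).} Plugging Steps 2 and 3 into the abstract factorization theorem of~\cite{GMM}, and using the spectral gap $\lambda$ of $\LL$ in $E$ from~\cite{GI} (and the invariance of the projection $\Pi f := \mu\langle f\rangle$), I would get the exponential decay of $S_\LL(t)(f_0 - \mu\langle f_0\rangle)$ in $\EE$ with rate $a$ for any $a\in(-\min(\lambda,k),0)$, which is exactly the claim.

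The main obstacle is Step 2, specifically the precise handling of the nonlocal term $\int I[f]\,\text{sign}(f)\,\langle x\rangle^k dx$: one must justify the Kato inequality and the integration-by-parts transfer of the fractional Laplacian onto the polynomial weight in a class of functions only assumed to lie in $L^1(\langle x\rangle^k)$, and one must control $I[\langle x\rangle^k]$ pointwise with the sharp rate $\langle x\rangle^{k-\alpha}$ in order not to lose the optimal decay exponent $-k$.
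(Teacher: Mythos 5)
Your Steps 1, 2 and 4 follow the paper's proof essentially verbatim: same cutoff splitting $\AA = M\chi_R$, $\BB = \LL - \AA$, same Kato-type convexity inequality $\mathrm{sign}(f)\,I[f]\le I[|f|]$ combined with the fractional integration by parts against the weight, and the same appeal to the abstract factorization theorem with small space $L^2(\widetilde\mu^{-1/2})$ (your $L^2(\mu^{-1/2})$ is the same space since $\mu\approx |x|^{-d-\alpha}$). One small remark on Step 2: the sharp asymptotics $I[\langle x\rangle^k]=O(\langle x\rangle^{k-\alpha})$ that you flag as essential is not needed; the paper only proves the cruder bound $I[m]/m\to 0$ (via a Taylor expansion near the diagonal and the H\"older continuity of $s\mapsto s^{k/2}$ off the diagonal), which already yields $\psi_{m,1}\to -k$, and that is all the dissipativity argument requires.

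The genuine gap is Step 3, which is in fact the technical heart of the paper and where your proposed mechanism would not go through as stated. What is needed is not a gain of derivatives but an ultracontractivity-type bound $\|e^{\BB t}\|_{L^1(m_0)\to L^2(m_0)}\lesssim e^{bt}t^{-d/(2\alpha)}$ in a \emph{weighted} space; a Fourier argument is unavailable because the drift $\mathrm{div}(xf)$ and the cutoff $M\chi_R$ destroy translation invariance, and an abstract "$\alpha/2$ derivatives per unit time" heuristic does not convert $L^1$ data into $L^2$ data. The paper's route is a Nash iteration: a fractional Nash inequality $\|g\|_{L^2}^2\lesssim \|g\|_{L^1}^{2\alpha/(d+\alpha)}\,[g]_{H^{\alpha/2}}^{2d/(d+\alpha)}$ applied to $g=f_t m_0$, fed into the $L^2(m_0^2)$ energy identity, whose dissipation term $\iint |f(x)-f(y)|^2|x-y|^{-d-\alpha}m_0^2(x)$ must first be compared to $[f_tm_0]_{H^{\alpha/2}}^2$ via a commutator estimate between the weight and the Dirichlet form (this costs a harmless $O(\|f_tm_0\|_{L^2}^2)$). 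This forces the introduction of an auxiliary weight $m_0=\langle x\rangle^{k_0}$ with $k_0<\min(k,\alpha/2)$ so that $I[m_0^2]$ is well defined, a point absent from your sketch; one then recovers the original weights because $\AA f$ is compactly supported. The resulting differential inequality $X'\le -CY_0^{-2\alpha/d}X^{1+\alpha/d}+C'X$ gives the $t^{-d/(2\alpha)}$ singularity, and Riesz--Thorin interpolation plus finitely many Duhamel convolutions reduce the singularity below $1$, as required by hypothesis (2)-(iii) of the abstract theorem. Without this (or an equivalent) argument, your Step 3 is an assertion rather than a proof. Finally, note that the abstract theorem also requires the dissipativity of $\BB-a$ on the \emph{small} space, which the paper checks separately using the spectral gap of Corollary~\ref{cor:entropie}; you should make that verification explicit rather than leave it implicit in Step 4.
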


\medskip

\subsection{Known results}
The main references to mention here are the papers~\cite{BK} and~\cite{GI}. In these two papers, ``L\'{e}vy-Fokker-Planck equations" (the fractional Laplacian is replaced by a L\'{e}vy operator) are studied using the entropy production method. There is a proof of existence and uniqueness of a nonnegative steady state of mass $1$ of the associated stationary equation. Then, in a weighted $L^2$ space with a weight prescribed by the equilibrium, a convergence (with an exponential rate) of the solution of the full equation towards equilibrium is obtained. Let us give more details about these results. We first introduce the main tools used. 

Consider a smooth convex function $\Phi : \R^+ \rightarrow \R$ and $\mu$ positive such that $\int_{\R^d} \mu(x) \, dx =1$ and define the $\Phi$-entropy: for any nonnegative function $f$,
$$
\text{Ent}^\Phi_\mu (f) := \int_{\R^d} \Phi(f) \, \mu \, dx - \Phi\left( \int_{\R^d} f \, \mu \, dx \right).
$$
Jensen's inequality gives that $\text{Ent}^\Phi_\mu (f) \geq 0$. Let $f_0$ be an initial condition of a L\'{e}vy-Fokker-Planck equation or
of the classical Fokker-Planck equation:
\beqn \label{eq:FP0}
\partial_t f = \Delta f + \text{div}(xf) , \quad \text{in} \,\, \R^d
\eeqn
Then, let us introduce the quantity  
$
E_\Phi(f_0)(t) := \text{Ent}^\Phi_\mu \left( \frac{f(t)}{\mu} \right)
$
which is well-defined for any $t>0$. 

In the case of the classical Fokker-Planck equation~(\ref{eq:FP0}), by using functional inequalities as Poincar\'{e}, logarithmic Sobolev or $\Phi$-entropy inequalities, one obtains exponential decays to zero of $E_\Phi(f_0)$. Then, the solution $f$ of~(\ref{eq:FP0}) converges towards the steady state of mass $1$ in the sense of $\Phi$-entropy. Methods to prove such results are usually based on entropy/entropy-production tools. See~\cite{Bak, MR1845806, MR1842428, Chaf} for different methods and applications. 

In~\cite{BK}, Biler and Karch study L\'{e}vy-Fokker-Planck equations where the L\'{e}vy operators are Fourier multipliers associated to symbols $a(\xi)$ satisfying for some real number $\beta \in (0,2]$ 
$$
0 < \liminf_{\xi \rightarrow 0} \frac{a(\xi)}{|\xi|^\beta} \leq \limsup_{\xi \rightarrow 0} \frac{a(\xi)}{|\xi|^\beta} < \infty \quad \text{and} \quad 0< \inf \frac{a(\xi)}{|\xi|^2}.
$$
They prove that there exist $C>0$ and $\epsilon>0$ such that 
$$
E_{| \cdot|^2/2}(f_0)(t) \le C e^{-\epsilon t},
$$
which means that the solution converges towards equilibrium at an exponential rate in $L^2(\mu^{-1/2})$ where we denote $\mu$ the only steady state of mass $1$. They deduce a similar result in $L^2$ and finally, under some more restrictive regularity and decay assumptions on $f_0$, they prove that the exponential convergence holds in $L^1$. 

In~\cite{GI}, taking advantage of the paper~\cite{BK}, Gentil and Imbert prove an exponential decay of the $\Phi$-entropies for a class of convex functions $\Phi$ and for a larger class of operators which includes the fractional Laplacian.

%In section~\ref{sec:L2}, we remind the main result used later on in the application of the abstract theorem of enlargement of the space where we have a spectral gap. 
In the present paper, we only consider the equation~(\ref{eq:FP}) but we are able to enlarge the space where we have a decay towards equilibrium with minimal assumptions on $f_0$. If we compare our result to the one obtained in~\cite{BK} for others operators defined above, we have to underline the fact that the result of convergence of the solution towards equilibrium in $L^1$ from~\cite{BK} requires additional assumptions on $f_0$ ($f_0$ must have finite moments of a large order), it is not the case in our main result where $f_0$ is only supposed to belong to $L^1(\langle x \rangle^k)$ with $k<\alpha$. 
\medskip

\subsection{Method of proof and outline of the paper}
The main outcome of the present paper is a result of decay towards equilibrium with an exponential rate of convergence in $L^1(\langle x \rangle^k)$ (with $k<\alpha$) for solutions of our equation~(\ref{eq:FP}). To do that, we adopt the same strategy as the one adopted in~\cite{GMM} by Gualdani, Mischler and Mouhot for the classical Fokker-Planck equation. 
Let us explain in more details this strategy. It is based on the theory of enlargement of the functional space of the semigroup decay developed in~\cite{GMM}. It enables to get a spectral gap in a larger space when we already have one in a smaller space. It applies to operators $\LL$ which can be splitted into two parts, $\LL=\AA+\BB$ with $\AA$ bounded and $\BB$ dissipative. Moreover, if we denote $e^{\BB t}$ the semigroup associated to the operator $\BB$, the semigroup $\left(\AA e^{\BB t}\right)$ is required to have some regularization properties. 
The fact that we can use this theory for our operator is based on two facts:
\begin{itemize}
\item we know from~\cite{GI} that our operator has a spectral gap in $L^2(\mu^{-1/2})$ where $\mu$ is the only steady state of mass $1$ of~(\ref{eq:FP}),
\item we are able to get a splitting satisfying the previous properties using computations based on properties of the fractional Laplacian.
\end{itemize}

%We use several results obtained in~\cite{GI}: about existence of steady states of this equation and about the long-time behavior of solutions of the equation. We use the result about exponential decay to equilibrium in the space $L^2(\mu^{-1/2})$ where $\mu$ is a steady state of the equation. The main aim of our paper is to obtain such decays properties in larger spaces. We enlarge this property to the space $L^1(m)$ where $m$ is a polynomial weight. The cornerstone of this enlargement of space is the theory developed in~\cite{GMM} by Gualdani, Mischler and Mouhot. This theory enables us to get a spectral gap in a larger space when we already have one in a smaller space. It applies to operators $\LL$ which can be splitted into two parts, $\LL=\AA+\BB$ with $\AA$ bounded and $\BB$ dissipative. Moreover, if we denote $S_\BB(t)$ the semigroup associated to the operator $\BB$, the semigroup $\AA S_\BB(t)$ is required to have some regularization properties. We refer to Section~\ref{sec:abstractthm} for further details about this theorem of enlargement of spectral gap. The purpose of section~\ref{sec:L1} is to find a splitting which satisfies the previous properties in order to apply the enlargement theorem from~\cite{GMM}.  

In section~\ref{sec:LF}, we recall some technical tools about the fractional Laplacian that are useful in order to get a splitting of the operator. In section~\ref{sec:L2}, we state results from~\cite{GI} which are necessary to apply the abstract theorem of enlargement of spectral gap, which is reminded in Section~\ref{sec:abstractthm}. Finally, in Section~\ref{sec:L1}, we apply this theorem to obtain our main result on the convergence towards equilibrium of the solution of~(\ref{eq:FP}) in $L^1(\langle x \rangle^k)$ with $k<\alpha$. 

\medskip
\noindent \textbf{Acknowledgements} We would like to thank St\'{e}phane Mischler and Robert Strain for enlightened discussions and their help.
\bigskip

%%%%%%%%%%%%%%%%%%%%%%%%%%%%%%%%%%%%%%%%%%%%%%%%%%%%%%%%%%%%%%%%

\section{Preliminaries on the fractional Laplacian} 
\label{sec:LF}
\setcounter{equation}{0}
\setcounter{theo}{0}

%%%%%%%%%%%%%%%%%%%%%%%%%%%%%%%%%%%%%%%%%%%%%%%%%%%%%%%%%%%%%%%

In this section, we recall some elementary properties of the fractional Laplacian that we will need through this paper. The usual reference for this kind of operators is Landkof's book~\cite{Land}. 

\subsection{Definition on $\SS(\R^d)$}

Let us consider $\alpha \in (0,2)$. The fractional Laplacian $\left( - \Delta \right)^{\alpha/2}$ is an operator defined on $\SS(\R^d)$  by: 
\beqn \label{eq:LF} 
\forall \, f \in \SS(\R^d), \quad 
\left(- \Delta \right)^{\alpha /2} f(x) =  \int_{\mathbb{R}^d} {\frac{f(x)-f(y)}{\left|x-y\right|^{d+\alpha}} dy}  . 
\eeqn
This definition has to be understood in the sense of principal value:
$$ 
\left(- \Delta \right)^{\alpha /2} f(x) = \lim_{\epsilon \rightarrow 0} \int_{\left|x-y\right| \geq \epsilon} {\frac{f(x)-f(y)}{\left|x-y\right|^{d+\alpha}} dy}  . 
$$

Due to the singularity of the kernel, the right hand-side of~(\ref{eq:LF}) is not well defined in general. However, when $\alpha \in (0,1)$, the integral is not really singular near $x$. Indeed, since $f \in \SS(\R^d)$, both $f$ and $\nabla f$ are bounded. We hence deduce the following inequality:
\begin{align*} 
\left| \int_{\mathbb{R}^d} {\frac{f(x)-f(y)}{\left|x-y\right|^{d+\alpha}} dy} \right| 
%&\leq \int_{\mathbb{R}^d} {\frac{\left|f(x)-f(y)\right|}{\left|x-y\right|^{d+\alpha}} dy}  \\
&\leq \| \nabla f \|_{L^\infty} \int_{\overline{B}(x, 1)} {\frac{dy}{\left|x-y\right|^{d+\alpha-1}}} + \| f \|_{L^\infty} \int_{\mathbb{R}^d \setminus \overline{B}(x, 1)} {\frac{dy}{\left|x-y\right|^{d+\alpha}}}.
\end{align*} 

When $\alpha \in (0,2)$, we can also write the fractional Laplacian with a non principal value integral. For any $f \in \SS(\R^d)$, we have 
\beqn \label{eq:LF2}
\forall \, x \in \mathbb{R}^d, \quad \left(- \Delta \right)^{\alpha /2} f(x) = -\frac{1}{2} \int_{\mathbb{R}^d} {\frac{f(x+y)+f(x-y) - 2 f(x)}{\left|y\right|^{d+\alpha}} \, dy}
\eeqn 
and this integral is well defined.

We can extend the integral definition of the fractional Laplacian to the following set of functions:
$$
 \left\{ f: \R^d \rightarrow \R, \, \, \int_{\R^d} \frac{|f(x)|}{1+|x|^{d+\alpha}} \, dx < \infty \right\}
$$
In particular, we can define $(-\Delta)^{\alpha/2} \langle x \rangle^k$ when $k<\alpha$.

\medskip
\subsection{Fractional Laplacian and Fourier transform}
Let us remind a well-known fact about the Fourier transform of the fractional Laplacian of a Schwartz function.
\begin{lem}
\label{lem:LFFourier}
There exists $C >0$ such that for any $f \in \mathcal{S}\left(\mathbb{R}^d\right)$, we have:
$$
\mathcal{F} \left(\left(- \Delta \right)^{\alpha /2} f\right)(\xi) = C \left|\xi\right|^\alpha \widehat{f}(\xi) . 
$$ 
\end{lem}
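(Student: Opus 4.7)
The plan is to start from the symmetric non–principal–value representation \eqref{eq:LF2}, take the Fourier transform of both sides, and exchange the order of integration. Writing $f(x\pm y)=\tau_{\mp y}f(x)$ and using the translation rule $\mathcal{F}(\tau_h g)(\xi)=e^{-ih\cdot\xi}\widehat{g}(\xi)$, one obtains formally
\[
\mathcal{F}\bigl((-\Delta)^{\alpha/2}f\bigr)(\xi)
=-\frac{1}{2}\,\widehat{f}(\xi)\int_{\R^d}\frac{e^{iy\cdot\xi}+e^{-iy\cdot\xi}-2}{|y|^{d+\alpha}}\,dy
=\widehat{f}(\xi)\int_{\R^d}\frac{1-\cos(y\cdot\xi)}{|y|^{d+\alpha}}\,dy.
\]
So the whole matter reduces to showing that the last integral equals $C|\xi|^\alpha$ for an explicit finite constant $C$.

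For the scaling identity, fix $\xi\neq 0$ and perform the change of variable $y=z/|\xi|$; then $y\cdot\xi=z\cdot e$ with $e=\xi/|\xi|$ and $dy=|\xi|^{-d}dz$, which produces the factor $|\xi|^\alpha$ in front of $\int_{\R^d}(1-\cos(z\cdot e))|z|^{-d-\alpha}\,dz$. Rotational invariance of Lebesgue measure and of $|z|$ shows that this last integral does not depend on $e\in\Sp^{d-1}$, so it is a pure constant $C$. Finiteness of $C$ is checked by splitting at $|z|=1$: near the origin one uses $1-\cos(z\cdot e)\le \tfrac12(z\cdot e)^2\le\tfrac12|z|^2$ together with $\alpha<2$, while far from the origin one bounds $|1-\cos|\le 2$ and uses $\alpha>0$.

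The only genuinely delicate point is justifying the exchange of the spatial integral defining $\widehat{(-\Delta)^{\alpha/2}f}$ with the $y$--integral coming from \eqref{eq:LF2}. This is where the symmetric form is crucial: for fixed $x$, a second order Taylor expansion gives $|f(x+y)+f(x-y)-2f(x)|\le |y|^2\,\|D^2 f\|_{L^\infty}$, so near $y=0$ the integrand is dominated by $|y|^{2-d-\alpha}$, which is integrable because $\alpha<2$; away from the origin the bound $|f(x+y)+f(x-y)-2f(x)|\le 4\|f\|_{L^\infty}$ together with $\alpha>0$ handles the tail. Since $f\in\SS(\R^d)$, one then integrates these dominations against $e^{-ix\cdot\xi}$ in $x$ without trouble (each of the three pieces is separately Schwartz in $x$), so Fubini applies and the formal computation above is rigorous.

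The main obstacle, as usual with singular integrals of this type, is the interchange of integrals at $y=0$; the use of the symmetric representation \eqref{eq:LF2} rather than the principal value form \eqref{eq:LF} is precisely what makes this step painless, so once one has adopted \eqref{eq:LF2} the rest of the argument is just scaling plus rotational invariance.
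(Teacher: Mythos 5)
Your argument is correct, and it is the standard proof of this identity. Note that the paper itself offers no proof of Lemma~\ref{lem:LFFourier}: it is stated as a well-known fact (with the general reference to Landkof), so there is nothing to compare against; your write-up simply supplies the missing classical argument via the symmetric representation \eqref{eq:LF2}, Fubini, and the scaling/rotation computation of $\int_{\R^d}(1-\cos(z\cdot e))\,|z|^{-d-\alpha}\,dz$. One small point of precision: to invoke Fubini you need a dominating function integrable \emph{jointly} in $(x,y)$, and the bound $|f(x+y)+f(x-y)-2f(x)|\le |y|^2\|D^2f\|_{L^\infty}$ is constant in $x$, hence not integrable in $x$. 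The fix is immediate since $f$ is Schwartz: for $|y|\le 1$ use $|f(x+y)+f(x-y)-2f(x)|\le |y|^2\sup_{|t|\le 1}|D^2f(x+ty)|\le C_N\,|y|^2\,\langle x\rangle^{-N}$, and for $|y|\ge 1$ integrate each of the three translates in $x$ separately (each has $L^1_x$ norm $\|f\|_{L^1}$). With that adjustment the interchange is fully justified and the proof is complete; it is also worth observing that the constant $C$ is strictly positive because the integrand $1-\cos(z\cdot e)$ is nonnegative and not identically zero.
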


If $f$ is a Schwartz function, there is a singularity at $0$ in the Fourier transform of $(-\Delta)^{\alpha/2} f$. It implies a lack of decay at infinity for $(-\Delta)^{\alpha/2} f$ itself, $(-\Delta)^{\alpha/2} f$ is not a Schwartz function. We can prove that $(-\Delta)^{\alpha/2} f$ decays at infinity as $|x|^{-d-\alpha}$. %using Littlewood-Paley theory. %Autre m\'{e}thode : la transform\'{e}e de Fourier de $|x|^k$ est $|x|^{-d-k}$ au sens des distributions 

We now mention a very useful property of the fractional Laplacian which can be seen as a sort of integration by parts. 
\begin{lem} \label{lem:IPP}
Let us consider $f$ and $g$ two Schwartz functions. Then, we have
$$
\int_{\R^d} (-\Delta)^{\alpha/2} f(x) \, g(x) \, dx = \int_{\R^d} f(x) \, (-\Delta)^{\alpha/2} g(x) \, dx.
$$
If $k<\alpha$, we can also prove that 
$$
\int_{\R^d} (-\Delta)^{\alpha/2} f (x) \, \langle x \rangle^k \, dx = \int_{\R^d} f(x) \, (-\Delta)^{\alpha/2} \langle x \rangle^k\, dx.
$$
\end{lem}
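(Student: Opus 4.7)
The plan is to prove both identities from the symmetric representation~\eqref{eq:LF2} of the fractional Laplacian, combined with Fubini's theorem and the substitutions $x \mapsto x \mp y$. For the first identity, where $f, g \in \SS(\R^d)$, the quickest route is Fourier: Lemma~\ref{lem:LFFourier} shows that $(-\Delta)^{\alpha/2}$ is a Fourier multiplier with real symbol $C|\xi|^\alpha$, so Plancherel (applied to real representatives, or extended sesquilinearly) produces the same expression for both $\int (-\Delta)^{\alpha/2}f \cdot g$ and $\int f \cdot (-\Delta)^{\alpha/2}g$. A more elementary alternative, which also serves as the template for the second part, is to start from~\eqref{eq:LF2}, view $\int (-\Delta)^{\alpha/2}f(x)\, g(x)\,dx$ as a double integral over $(x,y)$, and apply Fubini together with the substitutions $x \mapsto x \mp y$ in the translated summands to transfer the operator onto $g$; absolute convergence is automatic from the Schwartz decay of $f$ and $g$.

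For the second identity the obstacle is that $\langle x\rangle^k \notin \SS(\R^d)$, so the Fourier route is unavailable and Fubini demands genuine work. I would write
$$\int (-\Delta)^{\alpha/2}f(x)\,\langle x\rangle^k\,dx = -\frac12 \int\int \frac{f(x+y)+f(x-y)-2f(x)}{|y|^{d+\alpha}}\,\langle x\rangle^k\,dy\,dx,$$
exchange integrals by Fubini, shift $x \mapsto x \mp y$ in the two translated terms, and exchange integrals a second time to reach
$$-\frac12 \int f(x) \int \frac{\langle x+y\rangle^k + \langle x-y\rangle^k - 2\langle x\rangle^k}{|y|^{d+\alpha}}\,dy\,dx = \int f(x)\,(-\Delta)^{\alpha/2}\langle x\rangle^k\,dx,$$
the final equality being~\eqref{eq:LF2} applied to $\langle\cdot\rangle^k$, which is legitimate because $k<\alpha$ places $\langle x\rangle^k$ in the extended domain described at the end of Section~\ref{sec:LF}.

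The hard part will be justifying the two Fubini applications by absolute convergence, and this is precisely where the assumption $k<\alpha$ enters. I would split the $y$-integration at $|y|=1$. For $|y|\le 1$, Taylor's theorem applied to $f$, respectively to the smooth function $\langle\cdot\rangle^k$, yields
$$|f(x+y)+f(x-y)-2f(x)| \le |y|^2\sup_{B(x,1)}|D^2 f|, \quad |\langle x+y\rangle^k+\langle x-y\rangle^k-2\langle x\rangle^k| \le C|y|^2\langle x\rangle^{k-2},$$
both of which absorb the singularity $|y|^{-d-\alpha}$ since $\alpha<2$, while the Schwartz decay of $f$ controls the $x$-integration against the polynomial weight $\langle x\rangle^k$. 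For $|y|>1$, the crude bound $\langle x\pm y\rangle^k \le C(\langle x\rangle^k+\langle y\rangle^k)$ combined with the rapid decay of $f$ reduces the outer estimate to $\int_{|y|>1}\langle y\rangle^k/|y|^{d+\alpha}\,dy$, which is finite \emph{precisely} when $k<\alpha$.
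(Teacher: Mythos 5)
The paper states Lemma~\ref{lem:IPP} without proof, so there is nothing to compare your argument against; judged on its own terms, your proposal is correct. The Plancherel route settles the first identity because the symbol $C|\xi|^\alpha$ is real and even, which is exactly what is needed to move the multiplier from $\widehat f$ to $\widehat g$ (and $(-\Delta)^{\alpha/2}f\in L^2$ since its transform is $C|\xi|^\alpha\widehat f$). For the second identity your two Fubini exchanges are justified by precisely the estimates you list: the second-order Taylor bounds absorb the singularity at $y=0$ because $\alpha<2$, and the bound $\langle x\pm y\rangle^k\le C(\langle x\rangle^k+\langle y\rangle^k)$ together with the rapid decay of $f$ reduces the far-field contribution to $\int_{|y|>1}\langle y\rangle^k|y|^{-d-\alpha}\,dy$, which is finite exactly when $k<\alpha$. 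One point deserves emphasis, and your ordering of the steps handles it correctly: the three terms of $f(x+y)+f(x-y)-2f(x)$ are \emph{not} separately integrable against $|y|^{-d-\alpha}\,dy\,dx$ near $y=0$, so the numerator may only be split \emph{after} the first exchange, inside the $x$-integral at fixed $y\neq0$, where each term is integrable on its own; a joint splitting in $(x,y)$ would be illegitimate, and the same remark applies to your ``elementary alternative'' for the first identity. Two small items worth recording in a written version: the left-hand side $\int(-\Delta)^{\alpha/2}f\cdot\langle x\rangle^k$ is itself finite only because $(-\Delta)^{\alpha/2}f$ decays like $|x|^{-d-\alpha}$ and $k<\alpha$ (your absolute-convergence bounds already contain this), and the principal-value definition~(\ref{eq:LF}) extended to $\langle\cdot\rangle^k$ does coincide with the symmetric form~(\ref{eq:LF2}) by the usual symmetrization over annuli, which is the one-line justification your final equality needs.
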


\medskip
\subsection{Fractional Laplacian and fractional Sobolev spaces}

Most of the time, fractional Sobolev spaces $H^s\left(\mathbb{R}^d\right)$ are defined in the following way: $H^s\left(\mathbb{R}^d\right)=W^{s,2}\left(\mathbb{R}^d\right)$ for $s \geq0$ is the set of functions $f \in L^2\left(\mathbb{R}^d\right)$ such that $\left[\left(1+\left|\cdot\right|^2\right)^{s/2} \, \widehat{f}\right]$ is also in $L^2\left(\mathbb{R}^d\right)$.
We remind here an equivalent definition which is going to be useful in what follows.

\begin{lem}
\label{lem:sobolevfrac}
Let us consider $s \in (0,1)$. We have:
$$
H^s\left(\mathbb{R}^d\right)=\left\{f \in L^2\left(\mathbb{R}^d\right) \, : \, \frac{\left|f(x) - f(y)\right|}{\left|x-y\right|^{\frac{d}{2}+s}} \in L^2\left(\mathbb{R}^d \times \mathbb{R}^d \right)\right\}. 
$$
We also have the following fact:
$$
\left\| \left(-\Delta\right)^{\alpha /2} f \right\|^2_{L^2\left(\mathbb{R}^d\right)} =  C \int_{\mathbb{R}^d} {\int_{\mathbb{R}^d} {\frac{\left|f(x) - f(y)\right|^2}{\left|x-y\right|^{d+\alpha}} \, dy} \, dx}
$$
for some $C>0$.
\end{lem}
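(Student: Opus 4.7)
The natural route is to use Fourier analysis throughout: both the Gagliardo-type double integral and the $L^2$-norm of the fractional Laplacian can be rewritten as weighted $L^2$-norms of $\hat f$, and the equivalence then follows from a standard scaling computation. I would carry out the calculations first for $f \in \SS(\R^d)$ and extend by density at the end.

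For the characterization of $H^s(\R^d)$, the plan is to start from the usual Fourier definition and use the pointwise equivalence $(1+|\xi|^2)^s \sim 1 + |\xi|^{2s}$ to reduce the statement to proving that, for some positive constant $c_{s,d}$,
\beqn \label{eq:sketch-gag}
\int_{\R^d\times\R^d} \frac{|f(x)-f(y)|^2}{|x-y|^{d+2s}}\, dx\, dy = c_{s,d} \int_{\R^d} |\xi|^{2s} |\hat f(\xi)|^2\, d\xi.
\eeqn
The change of variable $y = x + h$ and Fubini recast the left-hand side as $\int_{\R^d} |h|^{-d-2s} \|f(\cdot + h)-f\|_{L^2}^2 \, dh$. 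Plancherel then yields $\|f(\cdot+h)-f\|_{L^2}^2 = (2\pi)^{-d}\int |e^{ih\cdot\xi}-1|^2 |\hat f(\xi)|^2\, d\xi$ with the appropriate normalization. Exchanging the two integrations, (\ref{eq:sketch-gag}) reduces to the pointwise identity
$$
\int_{\R^d} \frac{|e^{ih\cdot\xi}-1|^2}{|h|^{d+2s}}\, dh = c_{s,d}\, |\xi|^{2s},
$$
which I would establish by first rotating so that $\xi = |\xi| e_1$ and then rescaling $h \mapsto |\xi|^{-1} h$; the remaining integral does not depend on $\xi$ and is a finite positive constant.

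For the second identity, I would combine Lemma~\ref{lem:LFFourier} with Plancherel's theorem to express $\|(-\Delta)^{\alpha/2} f\|_{L^2}^2$ as a constant multiple of a weighted $L^2$-integral of $\hat f$, then invoke the Fourier-side formula just derived (applied with $2s = \alpha \in (0,2)$) to recognize the Gagliardo integral on the right-hand side and match the constants. The manipulations are justified for Schwartz functions via the rapid decay of $\hat f$, and the general statement then follows by a routine density argument.

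The main obstacle is the analysis of the $h$-integral $\int_{\R^d} |h|^{-d-2s}|e^{ih\cdot\xi}-1|^2\, dh$: its integrability must be checked carefully, since near the origin $|e^{ih\cdot\xi}-1|^2 = O(|h|^2|\xi|^2)$, which integrates only when $s<1$, while at infinity $|e^{ih\cdot\xi}-1|^2 \le 4$ requires $s>0$. These two conditions together reproduce precisely the hypothesis $s \in (0,1)$ of the lemma, and the subsequent scaling argument isolates the factor $|\xi|^{2s}$, leaving a universal positive constant depending only on $s$ and $d$.
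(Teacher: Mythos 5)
The paper gives no proof of this lemma (it is recalled as a classical fact), so there is nothing to compare against; your Fourier-analytic route — Plancherel, the substitution $y=x+h$, and the scaling identity $\int_{\R^d}|h|^{-d-2s}|e^{ih\cdot\xi}-1|^2\,dh=c_{s,d}|\xi|^{2s}$ — is the standard and correct way to establish the characterization of $H^s$, and your integrability check correctly isolates the hypothesis $s\in(0,1)$. That part of the proposal is fine.

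The second identity, however, does not close the way you describe, and the gap is exactly at ``match the constants.'' Lemma~\ref{lem:LFFourier} together with Plancherel gives
$$
\left\|(-\Delta)^{\alpha/2}f\right\|_{L^2}^2 = C\int_{\R^d}|\xi|^{2\alpha}\,|\hat f(\xi)|^2\,d\xi,
$$
whereas your Fourier-side formula applied with $2s=\alpha$ identifies the Gagliardo integral with kernel $|x-y|^{-d-\alpha}$ as $c\int_{\R^d}|\xi|^{\alpha}|\hat f(\xi)|^2\,d\xi$. The two weights $|\xi|^{2\alpha}$ and $|\xi|^{\alpha}$ differ by a power, not by a constant, so no choice of $C$ reconciles them; localizing $\hat f$ near high versus low frequencies shows the displayed identity is false as literally written. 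What your computation actually proves is the corrected statement
$$
\left\|(-\Delta)^{\alpha/4}f\right\|_{L^2}^2 \;=\; \int_{\R^d} f\,(-\Delta)^{\alpha/2}f \;=\; C\int_{\R^d}\int_{\R^d}\frac{|f(x)-f(y)|^2}{|x-y|^{d+\alpha}}\,dy\,dx ,
$$
valid for all $\alpha\in(0,2)$, and this is the version the paper actually uses downstream (in the proof of Lemma~\ref{lem:nash}, where $\int|\xi|^{\alpha}|\hat g|^2$ is converted into the double integral). So the defect is a typo in the lemma's statement that your argument silently inherits; to be complete you should either prove the $(-\Delta)^{\alpha/4}$ version, or, if you insist on $(-\Delta)^{\alpha/2}$ on the left, replace the kernel by $|x-y|^{-d-2\alpha}$ and note that the resulting identity only makes sense for $\alpha<1$.
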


\bigskip

%%%%%%%%%%%%%%%%%%%%%%%%%%%%%%%%%%%%%%%%%%%%%%%%%%%%%%%%%%%%%%%%

\section{Theorem of enlargement of the functional space of the semigroup decay} 
\label{sec:abstractthm}
\setcounter{equation}{0}
\setcounter{theo}{0}

%%%%%%%%%%%%%%%%%%%%%%%%%%%%%%%%%%%%%%%%%%%%%%%%%%%%%%%%%%%%%%%

\subsection{Notations}

For a given real number $a \in \R$, we define the half complex plane
$$
\Delta_a := \left\{ z \in \C, \, \Re e \, z > a \right\}.
$$

\smallskip
For some given Banach spaces $(E,\|\cdot \|_E)$ and $(\EE,\| \cdot
\|_\EE)$ we denote by $\mathscr{B}(E, \EE)$ the space of bounded linear
operators from $E$ to $\EE$ and we denote by $\| \cdot
\|_{\mathscr{B}(E,\EE)}$ or $\| \cdot \|_{E \to \EE}$ the associated norm
operator. We write $\mathscr{B}(E) = \mathscr{B}(E,E)$ when $E=\EE$.
We denote by $\mathscr{C}(E,\EE)$ the space of closed unbounded linear
operators from $E$ to $\EE$ with dense domain, and $\mathscr{C}(E)=
\mathscr{C}(E,E)$ in the case $E=\EE$.

\smallskip
For a Banach space $X$ and   $\Lambda \in \mathscr{C}(X)$ we denote by $e^{\Lambda t}$, $t \ge
0$, its semigroup, by $\mbox{D}(\Lambda)$ its domain, by
$\mbox{N}(\Lambda)$ its null space and by $\mbox{R}(\Lambda)$ its range. We also
denote by $\Sigma(\Lambda)$ its spectrum, so that for any $z$ belonging to the resolvent set $\rho(\Lambda) :=  \C
\backslash \Sigma(\Lambda)$  the operator $\Lambda - z$ is invertible
and the resolvent operator
$$
\RR_\Lambda(z) := (\Lambda -z)^{-1}
$$
is well-defined, belongs to $\mathscr{B}(X)$ and has range equal to
$\mbox{D}(\Lambda)$.
We recall that $\xi \in \Sigma(\Lambda)$ is said to be an eigenvalue
if $\mbox{N}(\Lambda - \xi) \neq \{ 0 \}$. Moreover, an eigenvalue $\xi \in
\Sigma(\Lambda)$ is said to be isolated if
\[
\Sigma(\Lambda) \cap \left\{ z \in \C, \,\, |z - \xi| \le r \right\} =
\{ \xi \} \ \mbox{ for some } r >0.
\]
In the case when $\xi$ is an isolated eigenvalue, we may define
$\Pi_{\Lambda,\xi} \in \mathscr{B}(X)$ the associated spectral projector by
$$
\Pi_{\Lambda,\eta} := - {1 \over
  2i\pi} \int_{ |z - \xi| = r' } (\Lambda-z)^{-1} \, dz
$$
with $0<r'<r$. Note that this definition is independent of the value
of $r'$ as the application
$
\C \setminus \Sigma(\Lambda) \to \mathscr{B}(X)$, $z \to \RR_{\Lambda}(z)$ is holomorphic.
For any $\xi \in \Sigma(\Lambda)$ isolated, it is well-known (see~\cite{Kato} paragraph III-6.19) 
that $\Pi_{\Lambda,\xi}^2=\Pi_{\Lambda,\xi}$,  so that $\Pi_{\Lambda,\xi}$ is indeed a projector.
%and that the ``associated projected semigroup"
%\[
%S_{\Lambda,\xi}(t) := -\frac{1}{2i \pi} \int_{|z-\xi|=r'} e^{zt}
%\RR_\Lambda (z) \, dz , \quad t >0,
%\]
%satisfies
%$$
%S_{\Lambda,\xi}(t)
%= \Pi_{\Lambda,\xi}  S_{\Lambda}(t) =
%S_\Lambda(t)  \Pi_{\Lambda,\xi}, \quad t >0.
%$$

\smallskip
When moreover the so-called ``algebraic eigenspace" $\mbox{R}(\Pi_{\Lambda,\xi})$ is finite dimensional we say that
$\xi$ is a discrete eigenvalue, written as $\xi \in \Sigma_d(\Lambda)$. In that case,
$\RR_\Lambda$ is a meromorphic function on a neighborhood of $\xi$,
with non-removable finite-order pole $\xi$. 
%and there exists $\alpha_0 \in \N^*$ such that
%\[
%\mbox{R}(\Pi_{\Lambda,\xi}) = N(\Lambda -\xi)^{\alpha_0} = N(\Lambda -\xi)^\alpha  \ \mbox{ for
 % any } \ \alpha \ge \alpha_0.
%\]
%On the other hand, for any $\xi \in \C$ we may also define the ``classical algebraic eigenspace"
%$$
%M(\Lambda-\xi) := \lim_{\alpha \to \infty} N(\Lambda-\xi)^\alpha.
%$$
%We have then $M(\Lambda-\xi) \not= \{0\}$ if $\xi \in \Sigma(\Lambda)$ is an eigenvalue and
 % $M(\Lambda-\xi) = \mbox{R}(\Pi_{\Lambda,\xi})$ if $\xi \in \Sigma_d(\Lambda)$.

\smallskip
Finally for any $a \in \R$ such that
\[
\Sigma(\Lambda) \cap \Delta_{a} = \left\{ \xi_1, \dots, \xi_k\right\}
\]
where $\xi_1, \dots, \xi_k$ are distinct discrete eigenvalues, we define
without any risk of ambiguity
\[
\Pi_{\Lambda,a} := \Pi_{\Lambda,\xi_1} + \dots \Pi_{\Lambda,\xi_k}.
\]

We shall also need the following definition on the convolution of semigroups. Consider some Banach spaces $X_1$, $X_2$ and $X_3$. For two given functions 
$$
S_1 \in L^1\left(\mathbb{R}^+ ; \BBB \left(X_1,X_2\right)\right) \quad \text{and} \quad S_2 \in L^1\left(\mathbb{R}^+ ; \BBB\left(X_2,X_3\right)\right), 
$$
the convolution $S_2 \ast S_1 \in L^1\left(\mathbb{R}^+ ; \BBB\left(X_1,X_3\right)\right)$ is defined by
$$
\forall \,  t \geq 0, \quad S_2 \ast S_1(t) = \int_0^t {S_2(s) \, S_1(t-s) \, ds} .
$$
When $S_1 = S_2$ and $X_1 = X_2 = X_3$, $S^{(\ast \ell)}$ is defined recursively by $S^{(\ast 1)} = S$ and \linebreak$S^{(\ast \ell)} = S ^{(\ast (\ell-1))}$ for any $\ell \geq 2$.

\medskip

\subsection{The abstract theorem}

Let us now present an enlargement of the functional space of a quantitative spectral mapping theorem (in the sense of semigroup decay estimate). The aim is to enlarge the space where the decay estimate on the semigroup holds. The version stated here comes from \cite[Theorem~2.13]{GMM} and \cite[Lemma~2.17]{GMM}.

\begin{theo}
\label{th:spectralgap}
Let $E$, $\EE$ be two Banach spaces such that $E \subset \EE$ with dense and continuous embedding, and consider $L \in \CCC(E)$, $\LL \in \CCC(\EE)$ with  $\mathcal{L}_{|E} = L$ and $a \in \R$. 
We assume:
\begin{enumerate}
\item[{\bf (1)}]  $L$ generates a semigroup $e^{tL}$ and
$$ 
\Sigma(L) \cap \Delta_a = \left\{\xi_1, \dots ,\xi_k\right\} \subset \Sigma_d(L) 
$$
(with $\xi_k \neq \xi_{k'}$ if $k \neq k'$ and $\left\{\xi_1, \dots ,\xi_k\right\} = \emptyset$ if $k=0$) and $L-a$ is dissipative on $\mathrm{R} \left(\mathrm{Id} - \Pi_{L,a}\right)$.
\item[{\bf (2)}]  There exist $\AA, \, \BB \in \CCC(\EE)$ such that $\LL = \AA + \BB$ (with corresponding restrictions $A$ and $B$ on $E$) and some constants $\ell_0 \in \mathbb{N}^{\ast}$, $C \geq  1$, $b \in \mathbb{R}$ and $\gamma \in [0,1)$ so that  
\begin{enumerate}
\item[{\bf (i)}] $B-a$ and $\BB - a$ are dissipative respectively on $E$ and $\EE$,
\item[{\bf (ii)}] $A \in \BBB(E)$ and $\AA \in \BBB(\EE)$, 
\item[{\bf (iii)}] $T_{\ell_0} := \left(\AA e^{\BB t}\right)^{(\ast \ell_0)}$ satisfies 
$$
\forall \,t \geq 0, \quad \left\|T_{\ell_0}(t)\right\|_{\BBB(\EE,E)} \leq C \frac{e^{bt}}{t^\gamma}.
$$
\end{enumerate}
\end{enumerate}
Then the following estimate on the semigroup holds:
$$
\forall \, a'>a, \, \forall \,  t \geq0, \quad \left\|e^{\LL t} - \sum_{j=1}^k e^{Lt} \Pi_{\mathcal{L}, \xi_j} \right\|_{\BBB(\EE)} \leq C_{a'} e^{a't} \, .
$$
\end{theo}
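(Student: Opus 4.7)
My plan is to run an iterated Duhamel's formula in order to split $e^{\mathcal{L}t}$ into a piece staying in $\mathcal{E}$ (bounded directly by dissipativity) and a piece regularised into the smaller space $E$ (where hypothesis~(1) takes over). Writing $V(s) := \mathcal{A}e^{\mathcal{B}s}$, the identity $e^{\mathcal{L}t} = e^{\mathcal{B}t} + (e^{\mathcal{L}\cdot} * V)(t)$ iterated exactly $\ell_0$ times yields
$$
e^{\mathcal{L}t} \;=\; \sum_{j=0}^{\ell_0-1} \bigl(e^{\mathcal{B}\cdot} * V^{*j}\bigr)(t) \;+\; \bigl(e^{\mathcal{L}\cdot} * T_{\ell_0}\bigr)(t).
$$
The first sum lives entirely in $\mathcal{E}$: from (i) one has $\|e^{\mathcal{B}s}\|_{\mathcal{B}(\mathcal{E})} \leq e^{as}$, from (ii) $\mathcal{A}$ is bounded on $\mathcal{E}$, and iterated convolution of exponentials bounds each term by $C\, t^j e^{at}/j!$, hence by $C_{a'}e^{a't}$ for every $a'>a$.

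For the last term, the decisive point is that $T_{\ell_0}(s)$ sends $\mathcal{E}$ into $E$ by (iii), so $e^{\mathcal{L}(t-s)}$ in front of it may be replaced by the $E$-semigroup $e^{L(t-s)}$. Splitting via (1) as $e^{L\tau} = \sum_{j=1}^k e^{L\tau}\Pi_{L,\xi_j} + e^{L\tau}(\mathrm{Id} - \Pi_{L,a})$, the orthogonal dissipative piece convolved with the $e^{bs}/s^{\gamma}$ smoothing bound has integrable singularity at $s=0$ thanks to $\gamma<1$; should the effective growth rate $b$ exceed the target $a'$, I would iterate Duhamel further so that the final convolution kernel is sandwiched between the two dissipative semigroups $e^{\mathcal{B}\cdot}$ and $e^{L\cdot}$, which drives the overall rate below any prescribed $a'>a$.

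The delicate remaining task, and the main obstacle, is to reassemble the projector contributions together with all other surviving terms into exactly $\sum_{j=1}^{k} e^{Lt}\Pi_{\mathcal{L},\xi_j}$. I would handle this at the resolvent level: Laplace-transforming the iterated Duhamel identity gives the factorisation
$$
\mathcal{R}_{\mathcal{L}}(z) \;=\; \sum_{j=0}^{\ell_0-1}(-1)^j \mathcal{R}_{\mathcal{B}}(z)\bigl(\mathcal{A}\mathcal{R}_{\mathcal{B}}(z)\bigr)^j \;+\; (-1)^{\ell_0}\bigl(\mathcal{A}\mathcal{R}_{\mathcal{B}}(z)\bigr)^{\ell_0}\mathcal{R}_{\mathcal{L}}(z),
$$
valid on $\rho(\mathcal{B})\cap\rho(\mathcal{L})$. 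The Laplace transform of (iii) turns $(\mathcal{A}\mathcal{R}_{\mathcal{B}}(z))^{\ell_0}$ into a bounded operator from $\mathcal{E}$ to $E$, uniformly on $\Delta_{b}$; combined with the meromorphic structure of $\mathcal{R}_L$ on $\Delta_{a}$ coming from~(1), this identifies $\Sigma(\mathcal{L})\cap\Delta_{a}=\{\xi_1,\ldots,\xi_k\}$ as discrete eigenvalues of $\mathcal{L}$ with $\Pi_{\mathcal{L},\xi_j}|_{E}=\Pi_{L,\xi_j}$, and an inverse-Laplace contour integral along $\{\Re e\, z = a'\}$ isolates the projector part from an $O(e^{a't})$ remainder. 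The hardest point is securing uniform $\mathcal{B}(\mathcal{E})$ bounds on $\mathcal{R}_{\mathcal{L}}(z)$ along this vertical line — needed to make the inverse-Laplace contour absolutely convergent — which follows by combining the factorisation above with the dissipativity of $\mathcal{B}-a$.
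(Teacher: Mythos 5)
Your sketch is essentially the factorization argument of Gualdani--Mischler--Mouhot: iterated Duhamel splitting of $e^{\LL t}$, direct dissipativity bounds on the purely-$\EE$ terms, the regularized remainder pushed into $E$ where hypothesis (1) applies, further iteration to beat the rate $b$, and the resolvent factorization to identify $\Sigma(\LL)\cap\Delta_a$ and the projectors. The paper itself gives no proof of this statement --- it is quoted from \cite[Theorem 2.13, Lemma 2.17]{GMM} --- and your proposal reproduces the approach of that reference, so it is consistent with the intended argument.
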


\begin{rem}
The assumption (2)-(iii) implies that for any $a'>a$, there exist some constructive constants $n \in \mathbb{N}$, $C_{a'} \geq 1$ such that 
$$ 
\forall \, t \geq 0, \quad \|T_{n}(t)\|_{\BBB(\EE,E)} \leq C_{a'} e^{a't} . 
$$
\end{rem}

%Let us finally give a lemma which provides a practical criterion to prove assumption \linebreak(2)-(iii) in the enlargement Theorem~\ref{th:spectralgap}. It is taken from~\cite[Lemma 2.17]{GMM}.  
%\begin{lem}
%\label{lem:convolution}
%We assume:
%\begin{enumerate}
%\item $B-a$ (resp. $\BB -a$) is dissipative on $E$ (resp. $\EE$),  $A \in \BBB(E)$ and $\AA \in \BBB(\EE)$.
%\item There are some constants $\ell_0 \in \mathbb{N}^{\ast}$, $C \geq 1$, $K \in \mathbb{R}$ and $\gamma \in [0,1)$ such that
%$$
%\forall \,t \geq 0, \quad \|T_{l_0}(t)\|_{\BBB(\EE,E)} \leq C \frac{e^{Kt}}{t^\gamma}, 
%$$
%with the notation $T_\ell = \left(\AA e^{\BB t}\right)^{(\ast \ell)}$.
%\end{enumerate}
%Then for any $a'>a$, there exist some constructive constants $n \in \mathbb{N}$, $C_{a'} \geq 1$ such that 
%$$ 
%\forall \, t \geq 0, \quad \|T_{n}(t)\|_{\BBB(\EE,E)} \leq C_{a'} e^{a't} . 
%$$

%\end{lem}

\bigskip

%%%%%%%%%%%%%%%%%%%%%%%%%%%%%%%%%%%%%%%%%%%%%%%%%%%%%%%%%%%%%%%%

\section{Semigroup decay in $L^2(\mu^{-1/2})$ where $\mu$ is the steady state} 
\label{sec:L2}
\setcounter{equation}{0}
\setcounter{theo}{0}

%%%%%%%%%%%%%%%%%%%%%%%%%%%%%%%%%%%%%%%%%%%%%%%%%%%%%%%%%%%%%%%

\subsection{Preliminaries on steady states}
We recall results obtained in~\cite{GI} about existence of steady states. They prove such a theorem for a more general equation than ours:
\begin{align*} 
\partial_t f = \, \mathcal{I} & \left[f\right] + \text{div}\left(f \nabla V\right) \quad x\in\mathbb{R}^d, \, t>0  \\
&f(0,x)=f_0(x) \qquad x \in \mathbb{R}^d  
\end{align*} 
where $f_0 \in L^1\left(\mathbb{R}^d\right)$. The operator $\mathcal{I}$ is a L\'{e}vy operator defined as:
$$
\mathcal{I}\left[f\right](x)= \text{div}\left(\sigma \nabla f \right)(x) - b \cdot \nabla f(x) + \int_{\mathbb{R}^d} {\left(f(x+z)-f(x)-\nabla f(x) \cdot z h(z)\right) \, \nu(dz) } 
$$ 
where $\sigma$ is a symmetric semi-definite $d \times d$ matrix, $b \in \R^d$ and $\nu$ denotes a nonnegative singular measure on $\R^d$ that satisfies $\nu\left(\left\{0\right\}\right)=0$ and $\int_{\mathbb{R}^d} {\min \left(1,\left|z\right|^2\right) \nu(dz)} < \infty$.  

The fractional Laplacian corresponds to a particular L\'{e}vy operator. Indeed, with  $\sigma=0$, $b=0$ and $\nu(dz) = \left|z\right|^{-d-\alpha} \, dz$, we obtain the fractional Laplacian. In this particular case, the proof of existence of steady states of~(\ref{eq:FP}) is easier, we hence give a sketch of a proof of it (it is adapted from the proof of~\cite[Theorem 1]{GI}). 

We suppose that $\mu$ is an equilibrium of the equation~(\ref{eq:FP}). At least formally, we have:
\begin{equation}
\label{eq:equilibrium}
I\left[\mu\right] + \text{div}(x\mu) = 0.
\end{equation} 
We do the following computation in order to take the Fourier transform of~(\ref{eq:equilibrium})
\begin{align*} 
\mathcal{F} \left(\text{div}(x\mu)\right) \left( \xi \right) 
&= \sum_{j=1}^d \mathcal{F} \left( \partial_j \left( x_j \mu \right) \right) \left(\xi\right) = \sum_{j=1}^d \textit{i } \xi_j \mathcal{F} \left( x_j \mu \right) \left( \xi \right) \\
&= - \sum_{j=1}^d \xi_j \partial_j \widehat{\mu}  \left( \xi \right) = - \xi \cdot \nabla \widehat{\mu} \left( \xi \right).
\end{align*} 
We deduce that an equilibrium $\mu$ satisfies 
$$
\left|\xi\right| ^\alpha \widehat{\mu} \left( \xi \right) + \xi \cdot \nabla \widehat{\mu} \left( \xi \right) = 0,
$$
which implies that $\widehat{\mu}(\xi) = C e^{ -\left|\xi\right|^\alpha/\alpha }$ for a constant $C$.

In the remaining part of the paper, we denote $\mu$ the only steady state of~(\ref{eq:FP}) of mass~$1$: $\mu \approx \FF^{-1} \left( e^{-|\cdot|^\alpha/\alpha} \right)$. 

\begin{rem} \label{rem:mu}
In the case of the classical Fokker-Planck equation~(\ref{eq:FP0}), the steady state is a Maxwellian, it is hence a Schwartz function. In our case, the steady state is not anymore a Schwartz function because its Fourier transform has a singularity at $0$. If we denote $\chi_1$ a smooth function which is nonnegative, supported on $|x| \leq 2$ and such that $\chi_1(x) = 1$ for $|x| \leq 1$, we can write the following decomposition of $\widehat{\mu}$: 
$$
\widehat{\mu}(\xi) = \chi_1(\xi) (1 + a_1 \, |\xi|^\alpha + a_2 \, |\xi|^{2\alpha} + \dots) + (1-\chi_1(\xi))  e^{ -\left|\xi\right|^\alpha/\alpha}.
$$
We see that the second part of the right-hand side is a Scwhartz function and the first one induces a singularity at $0$. We can hence prove that
$$
\mu(x) \approx |x|^{-d-\alpha} \quad \text{when} \quad |x| \rightarrow \infty.
$$
\end{rem}

\medskip

\subsection{Decay properties in $L^2(\mu^{-1/2})$}
We again use results obtained in~\cite{GI}. We just use them in our particular case, the fractional Laplacian. 

For $\Phi$ a convex function, we introduce $D_\Phi$ on $\left(\mathbb{R}^{+}\right)^2$ as:
$$
D_\Phi (a,b) = \Phi(a) - \Phi(b) - \Phi'(b) (a-b),
$$
which is nonnegative on $\left(\mathbb{R}^{+}\right)^2$.

We will not prove the next two lemmas which are going to enable us to prove the decay towards equilibrium in $L^2(\mu^{-1/2})$. The first one is~\cite[Proposition~1]{GI} and the second one is \cite[~Theorem~2]{GI} and comes from~\cite{Chaf}. 

\begin{lem}
Consider $f_0$ a nonnegative initial data for the equation~(\ref{eq:FP}) which satisfies $Ent_\mu^{\Phi}\left( \frac{f_0}{\mu} \right) < \infty$. Then, for any smooth convex function $\Phi$ and for any $t \geq 0$, the solution $f(t)$ satisfies
$$
\frac{d}{dt} E_\Phi \left(f_0 \right)(t) = - \int_{\R^d} \int_{\R^d} D_{\Phi} \left(u(t,x),u(t,x-z)\right) \frac{dz}{\left|z\right|^{d+\alpha}}  \, \mu(x) \, dx
$$
where $u(t,x)=f(t,x)/{\mu(x)}$.
\end{lem}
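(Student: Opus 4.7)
The strategy is to differentiate the $\Phi$-entropy in time and exhibit the right-hand side as the prescribed dissipation functional. Since mass is preserved for solutions of~\eqref{eq:FP} (integrating the equation in $x$, the nonlocal part gives zero by Lemma~\ref{lem:IPP} applied with $g\equiv 1$, and the drift by the divergence theorem), the constant term $\Phi(\int u\mu\,dx)=\Phi(\int f_0)$ is inert, so
\[
\frac{d}{dt}E_\Phi(f_0)(t)=\int_{\R^d}\Phi'(u)\,\partial_t u\,\mu\,dx.
\]

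The real work is to rewrite $\partial_t u=\mu^{-1}(I[u\mu]+\text{div}(xu\mu))$ in a form that manifestly dissipates. Here two identities cooperate: first, the pointwise splitting
\[
I[u\mu](x)=u(x)\,I[\mu](x)+\int_{\R^d}\frac{(u(y)-u(x))\mu(y)}{|x-y|^{d+\alpha}}\,dy,
\]
obtained from~\eqref{eq:LF} by writing $u(y)\mu(y)-u(x)\mu(x)=(u(y)-u(x))\mu(y)+u(x)(\mu(y)-\mu(x))$; and second, the Leibniz rule $\text{div}(xu\mu)=u\,\text{div}(x\mu)+x\mu\cdot\nabla u$. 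The two $u\,I[\mu]$ contributions cancel thanks to the stationarity relation $I[\mu]+\text{div}(x\mu)=0$ satisfied by $\mu$.

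Next I multiply by $\Phi'(u)$ and integrate. The nonlocal piece delivers a double integral with kernel $\Phi'(u(x))(u(y)-u(x))\mu(y)/|x-y|^{d+\alpha}$. For the drift piece I use $\Phi'(u)\nabla u=\nabla\Phi(u)$, integrate by parts, invoke $\text{div}(x\mu)=-I[\mu]$ once more, and unfold the result via~\eqref{eq:LF} into a double integral with integrand $\Phi(u(x))(\mu(y)-\mu(x))/|x-y|^{d+\alpha}$. At this point $\frac{d}{dt}E_\Phi(f_0)(t)$ is a single double integral whose numerator is the sum of these two contributions.

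The closing step, which I anticipate as the genuine core of the argument, is the symmetrization $x\leftrightarrow y$. Because $|x-y|^{-d-\alpha}$ is invariant under this relabeling, the double integral equals half the sum of itself and its relabelling. A purely algebraic rearrangement, grouping the terms carrying $\mu(x)$ and $\mu(y)$ separately, collapses the symmetrized integrand to
\[
-\mu(x)\,D_\Phi(u(x),u(y))-\mu(y)\,D_\Phi(u(y),u(x)),
\]
and a final swap $x\leftrightarrow y$ in the second Bregman term consolidates both into a single $-2\mu(x)D_\Phi(u(x),u(y))$. Dividing by two and changing variables $z=x-y$ yields the announced identity. The one delicacy left is to justify the integration by parts and the symmetrization manipulations rigorously, which I would handle by a standard truncation and regularization of $f_0$ followed by passage to the limit, using $D_\Phi\ge 0$ and the assumed finiteness of $\text{Ent}_\mu^\Phi(f_0/\mu)$.
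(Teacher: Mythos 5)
Your argument is correct. Note that the paper itself does not prove this lemma --- it is quoted as \cite[Proposition 1]{GI} --- but your computation reconstructs the standard proof from that reference: split $I[u\mu]$ and $\mathrm{div}(xu\mu)$ so that the stationarity relation $I[\mu]+\mathrm{div}(x\mu)=0$ cancels the zeroth-order terms, then symmetrize in $x\leftrightarrow y$ to assemble the Bregman divergence $D_\Phi$; the algebra checks out, including the sign and the final change of variables $y=x-z$. The one point to make explicit is that the two double integrals you add before symmetrizing are individually only principal values (the kernel is not integrable near the diagonal), so the rearrangement should be performed on the truncated kernel $|x-y|\ge\varepsilon$ and the limit taken afterwards, using that the symmetrized integrand $\mu(x)D_\Phi(u(x),u(y))+\mu(y)D_\Phi(u(y),u(x))$ is quadratic in $u(x)-u(y)$ near the diagonal and hence integrable --- which is exactly where your closing truncation remark belongs.
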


\begin{lem}
Let us suppose that $\Phi$ is a smooth convex function such that
$$
(a, b) \mapsto D_\Phi (a+b,b) \text{ is convex on } \left\{a+b \geq 0, b \geq 0\right\}. 
$$
Then, for any smooth function $v$, we have:
$$
Ent^\Phi_\mu \left(v(t,\cdot)\right) \leq K \int_{\mathbb{R}^d} {\int_{\mathbb{R}^d} {D_\Phi(v(t,x), v(t,x+z)) \frac{dz}{\left|z\right|^{d+\alpha}} \,  \mu(x) \, dx}} 
$$
for some $K>0$.
\end{lem}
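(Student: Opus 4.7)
The strategy is to follow the standard semigroup/entropy-dissipation scheme of Chafaï~\cite{Chaf}. Let $(P_t)_{t\ge 0}$ be the Markov semigroup defined by $P_t v_0 := f(t,\cdot)/\mu$, where $f$ solves~(\ref{eq:FP}) with initial datum $f_0 = v_0\,\mu$; its invariant probability measure is $\mu\,dx$, and the $L^2(\mu^{-1/2})$ decay result of~\cite{GI} ensures $P_t v_0 \to \int v_0\,\mu\,dx$ as $t\to\infty$ in a suitable sense. By the previous lemma, applied along $u_t := P_t v_0$, and using the substitution $z\mapsto -z$ (which leaves $|z|^{-d-\alpha}\,dz$ invariant) to symmetrize, one obtains the entropy-dissipation identity
\[
\frac{d}{dt}\,\text{Ent}^\Phi_\mu(u_t) \;=\; -\mathcal{I}_\Phi(u_t),
\qquad
\mathcal{I}_\Phi(u) := \int_{\R^d}\!\int_{\R^d} D_\Phi\bigl(u(x),u(x+z)\bigr)\,\frac{dz}{|z|^{d+\alpha}}\,\mu(x)\,dx.
\]
Integrating on $[0,\infty)$ and using $\text{Ent}^\Phi_\mu(u_t) \to 0$ then yields $\text{Ent}^\Phi_\mu(v_0) = \int_0^\infty \mathcal{I}_\Phi(P_s v_0)\,ds$. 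It therefore suffices to establish the exponential contraction $\mathcal{I}_\Phi(P_s v_0) \le e^{-cs}\,\mathcal{I}_\Phi(v_0)$ for some $c>0$: the inequality then follows with $K=1/c$.

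This contraction is the main obstacle and is where the convexity hypothesis enters. The Markov process associated with $\LL$ is an Ornstein--Uhlenbeck process with $\alpha$-stable jumps, $dX_s = -X_s\,ds + dL_s$. Under the synchronous coupling (same driving noise $L$), two copies started from $x$ and $x+z$ satisfy $X_s^{x+z} - X_s^x = e^{-s}z$ for every $s\ge 0$. Writing $U := v_0(X_s^x)$ and $V := v_0(X_s^{x+z}) = v_0(X_s^x + e^{-s}z)$, the hypothesis that $(a,b)\mapsto D_\Phi(a+b,b)$ is convex on $\{a+b\ge 0,\,b\ge 0\}$ is precisely what is needed to apply Jensen's inequality with $a = U - V$, $b = V$, giving
\[
D_\Phi\bigl(P_s v_0(x),\,P_s v_0(x+z)\bigr) \;=\; D_\Phi(\mathbb{E}U,\mathbb{E}V) \;\le\; \mathbb{E}\bigl[D_\Phi(U,V)\bigr].
\]

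Multiplying by $|z|^{-d-\alpha}\,dz\,\mu(x)\,dx$ and integrating, the change of variable $z' := e^{-s}z$ produces a factor $e^{-\alpha s}$ from the ratio $dz/|z|^{d+\alpha}$; combining this with the invariance identity $\int \mathbb{E}_x[h(X_s)]\,\mu(x)\,dx = \int h\,\mu\,dx$ (applied to $h(\cdot) = D_\Phi(v_0(\cdot),v_0(\cdot+z'))$) to push the expectation through the $\mu$-integral, one obtains
\[
\mathcal{I}_\Phi(P_s v_0) \;\le\; e^{-\alpha s}\,\mathcal{I}_\Phi(v_0),
\]
and the claim follows with $K = 1/\alpha$. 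The technical heart of the argument lies in rigorously justifying the synchronous coupling for the $\alpha$-stable-driven Ornstein--Uhlenbeck process and controlling the tail of the $z$-integral when passing through the inequality; this purely Jensen-type non-local argument plays, in the fractional setting, the role of the Bakry--Émery $\Gamma$-calculus available in the classical Fokker--Planck case. A density argument then extends the inequality from regular $v_0$ to the class of smooth functions allowed in the statement.
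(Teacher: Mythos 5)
First, a point of context: the paper does not prove this lemma at all --- it explicitly defers to \cite[Theorem~2]{GI}, which in turn rests on \cite{Chaf} --- so you are supplying an argument where the paper gives only a citation. Your overall strategy (integrate the entropy dissipation along a semigroup, then prove an exponential contraction of the dissipation functional by combining Jensen's inequality --- which is exactly where the convexity of $(a,b)\mapsto D_\Phi(a+b,b)$ enters --- with the scaling $z\mapsto e^{-s}z$ of the kernel $|z|^{-d-\alpha}\,dz$ and the invariance of $\mu$) is indeed the route of \cite{GI}, and each of these three ingredients is used correctly in isolation.

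There is, however, a genuine gap: you treat two different semigroups as if they were one. The dissipation identity you import from the previous lemma concerns $P_t v := e^{t\LL}(v\mu)/\mu$, the forward flow on densities, whereas the Mehler representation and the synchronous coupling $X_s^{x+z}-X_s^x=e^{-s}z$, on which your contraction step rests, concern the Markov semigroup $Q_t v(x)=\mathbb{E}\bigl[v(X_t^x)\bigr]$ of the process with generator $A=-(-\Delta)^{\alpha/2}-x\cdot\nabla$. These coincide only if the process is reversible with respect to $\mu$. The classical Ornstein--Uhlenbeck process is Gaussian-reversible, but the fractional one is not: $(-\Delta)^{\alpha/2}$ is symmetric with respect to Lebesgue measure, not with respect to $\mu$, so $\mu^{-1}\LL(\mu v)\neq Av$ and $P_t\neq Q_t$. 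As written, your proof chains an identity valid for $P_t$ with an estimate valid for $Q_t$. The repair is to run the whole argument with $Q_t$: establish directly the de Bruijn identity $\frac{d}{dt}\mathrm{Ent}^\Phi_\mu(Q_tv)=-\int\int D_\Phi\bigl(Q_tv(x+z),Q_tv(x)\bigr)\,|z|^{-d-\alpha}dz\,\mu(x)\,dx$ (this uses only $\int Ah\,d\mu=0$ and the chain rule for the drift), and then apply your Jensen/scaling/invariance step to that functional; in doing so you must also track the order of the two arguments of $D_\Phi$ relative to the weight $\mu(x)$, since neither $D_\Phi$ nor $\mu$ is symmetric and the $Q_t$-dissipation is not literally the functional in the statement. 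Two smaller points: invoking the $L^2(\mu^{-1/2})$ decay of \cite{GI} to justify $\mathrm{Ent}^\Phi_\mu(Q_tv)\to0$ is circular, since that decay is a consequence of the present lemma (use the Mehler formula $Q_tv(x)=\mathbb{E}[v(e^{-t}x+Z_t)]$ with $Z_t\Rightarrow\mu$ instead); and the ``technical heart'' is not the justification of the coupling, which is immediate from the explicit solution of the linear SDE, but precisely the reversibility issue above.
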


We can now state the main theorem (\cite[Theorem~1]{GI}) of this section, its proof is a direct consequence of the two previous lemmas and the Gronwall lemma. 
\begin{theo}
\label{th:entropie}
Consider a nonnegative initial data $f_0$ such that $Ent^\Phi_\mu\left(\frac{f_0}{\mu}\right) < \infty$. We then have: 
$$ 
\forall \, t \geq 0, \quad Ent^\Phi_\mu\left(\frac{f(t)}{\mu}\right) \leq e^{-t/K} \, Ent^\Phi_\mu\left(\frac{f_0}{\mu}\right) . 
$$
\end{theo}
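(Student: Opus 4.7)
The plan is to combine the entropy production identity from the first lemma with the functional inequality from the second lemma, and then close the estimate by Gronwall's lemma. Set $u(t,x):=f(t,x)/\mu(x)$ so that by definition $E_\Phi(f_0)(t)=\mathrm{Ent}^\Phi_\mu(u(t,\cdot))$. The first lemma then gives the dissipation identity
$$
\frac{d}{dt}E_\Phi(f_0)(t) = -\int_{\R^d}\int_{\R^d} D_\Phi\bigl(u(t,x),u(t,x-z)\bigr)\,\frac{dz}{|z|^{d+\alpha}}\,\mu(x)\,dx.
$$

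Next I would apply the second lemma to $v:=u(t,\cdot)$, which is legitimate because the hypothesis on $\Phi$ is assumed once and for all in the statement and, by the preservation of positivity for the equation~(\ref{eq:FP}), $u$ is nonnegative and smooth enough for $t>0$. This yields
$$
E_\Phi(f_0)(t)=\mathrm{Ent}^\Phi_\mu(u(t,\cdot)) \leq K \int_{\R^d}\int_{\R^d} D_\Phi\bigl(u(t,x),u(t,x+z)\bigr)\,\frac{dz}{|z|^{d+\alpha}}\,\mu(x)\,dx.
$$
The two expressions are compared through the change of variables $z\mapsto -z$: since the kernel $|z|^{-d-\alpha}$ is even, the integral in the right-hand side above equals the one arising in the dissipation identity. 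Combining gives the differential inequality
$$
\frac{d}{dt}E_\Phi(f_0)(t) \leq -\frac{1}{K}\,E_\Phi(f_0)(t),
$$
and Gronwall's lemma yields $E_\Phi(f_0)(t)\leq e^{-t/K}\,E_\Phi(f_0)(0)$, which is the announced estimate.

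The main obstacle I expect is not the analytic chain itself, which is essentially algebraic once the two lemmas are in hand, but rather justifying that the formal manipulations apply to a genuine solution of~(\ref{eq:FP}). One has to check that $u(t,\cdot)$ is smooth and decaying enough at infinity so that (i) both iterated integrals above converge absolutely, (ii) the entropy production identity of the first lemma holds in the pointwise sense rather than only formally, and (iii) the pairing between $D_\Phi$ and $\mu$ is non-degenerate despite the slow decay $\mu(x)\sim |x|^{-d-\alpha}$ noted in Remark~\ref{rem:mu}. The standard workaround is to run the argument first on a regularized problem (say with a truncated jump kernel, or on approximate initial data in $\mathcal S(\R^d)$) where all integrations by parts are licit, derive the inequality $E_\Phi(f_0^\eps)(t)\leq e^{-t/K}E_\Phi(f_0^\eps)(0)$, and then pass to the limit using lower semicontinuity of $\Phi$-entropies and continuous dependence of the solution on $f_0$ in a suitable weak topology.
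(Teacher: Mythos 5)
Your proof is correct and follows exactly the route the paper intends: the paper states that Theorem~\ref{th:entropie} ``is a direct consequence of the two previous lemmas and the Gronwall lemma,'' which is precisely your chain (dissipation identity, functional inequality applied to $v=u(t,\cdot)$ with the even kernel making the two double integrals coincide, then Gronwall). Your additional remarks on justifying the formal manipulations go beyond what the paper records but do not change the argument.
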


In what follows, we denote $\widetilde{\mu}(x) := \langle x \rangle^{-d-\alpha}$. We now give a corollary of this theorem which gives the decay property in the space $L^2(\widetilde{\mu}^{-1/2})$ i.e $L^2(\langle x \rangle^{(d+\alpha)/2})$. 
\begin{cor}
\label{cor:entropie}
Consider a nonnegative initial data $f_0$ such that $\left\|f_0 - \mu \left\langle f_0\right\rangle\right\|_{L^2\left(\widetilde{\mu}^{-1/2}\right)}$ is finite. Then, there exist $\lambda>0$ and $C>0$ such that:
$$
\left\|f(t) - \mu \left\langle f_0\right\rangle\right\|_{L^2\left(\widetilde{\mu}^{-1/2}\right)} 
\leq C \, e^{-\lambda t} \left\|f_0 - \mu \left\langle f_0\right\rangle\right\|_{L^2\left(\widetilde{\mu}^{-1/2}\right)} . 
$$
\end{cor}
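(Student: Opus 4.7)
The plan is to apply Theorem~\ref{th:entropie} with the quadratic choice $\Phi(u) = u^2$, then transfer the resulting $L^2(\mu^{-1/2})$ estimate to $L^2(\widetilde{\mu}^{-1/2})$ by comparing the two weights.

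First, one checks the structural hypotheses for $\Phi(u) = u^2$: a direct computation yields $D_\Phi(x,y) = (x-y)^2$, hence $D_\Phi(a+b,b) = a^2$, which is convex on $\R^2$, a fortiori on $\{a+b \ge 0,\ b \ge 0\}$. Mass is conserved by~(\ref{eq:FP}) (formally, Lemma~\ref{lem:IPP} with $g = 1$ together with $\int \mathrm{div}(xf)\,dx = 0$), so $\int_{\R^d} f(t,x)\,dx = \langle f_0\rangle$ for all $t \geq 0$. Writing $u = f/\mu$ and expanding,
\begin{equation*}
\mathrm{Ent}^{\Phi}_\mu(u) = \int_{\R^d}\frac{f^2}{\mu}\,dx - \langle f_0\rangle^2 = \int_{\R^d}\frac{(f - \mu\langle f_0\rangle)^2}{\mu}\,dx = \|f - \mu\langle f_0\rangle\|_{L^2(\mu^{-1/2})}^2.
\end{equation*}
Finiteness at $t=0$ follows from the hypothesis on $f_0$ via the weight comparison established below; Theorem~\ref{th:entropie} then yields
$$
\|f(t) - \mu\langle f_0\rangle\|_{L^2(\mu^{-1/2})} \leq e^{-t/(2K)}\,\|f_0 - \mu\langle f_0\rangle\|_{L^2(\mu^{-1/2})},
$$
i.e.\ the desired decay in $L^2(\mu^{-1/2})$ at rate $\lambda := 1/(2K)$.

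The one genuinely technical step is the pointwise equivalence $\mu \asymp \widetilde{\mu}$, namely the existence of $c_1, c_2 > 0$ with $c_1\widetilde{\mu} \leq \mu \leq c_2\widetilde{\mu}$. Since $\widehat{\mu}(\xi) = C e^{-|\xi|^\alpha/\alpha}$ belongs to $L^1(\R^d)$, $\mu$ is continuous and bounded; it is strictly positive, being the density of an isotropic $\alpha$-stable law; and $\mu(x) \sim c_\infty |x|^{-d-\alpha}$ at infinity is already recorded in Remark~\ref{rem:mu}. Splitting into the region $|x| \leq R$ (handled by continuity and positivity) and $|x| > R$ (handled by the far-field asymptotic for $R$ large enough) yields the two-sided pointwise bound, so the norms $\|\cdot\|_{L^2(\mu^{-1/2})}$ and $\|\cdot\|_{L^2(\widetilde{\mu}^{-1/2})}$ are equivalent. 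Injecting this equivalence on both sides of the preceding estimate produces the stated corollary. The only obstacle is thus this weight comparison; it is mild because the positivity and precise far-field behavior of isotropic stable densities are standard.
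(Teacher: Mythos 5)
Your proposal is correct and follows essentially the same route as the paper: apply Theorem~\ref{th:entropie} with a quadratic $\Phi$ (the paper uses $\Phi(s)=(s-\langle f_0\rangle)^2$, which differs from your $\Phi(u)=u^2$ only by an affine term and hence yields the identical $\Phi$-entropy), then transfer from $L^2(\mu^{-1/2})$ to $L^2(\widetilde{\mu}^{-1/2})$ via the two-sided comparison $\mu\asymp\widetilde{\mu}$ coming from Remark~\ref{rem:mu}. You simply spell out the details (convexity of $D_\Phi(a+b,b)$, mass conservation, positivity and far-field asymptotics of the stable density) that the paper leaves implicit.
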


\begin{proof}
Theorem~\ref{th:entropie} applied with $\Phi(s) = \left(s-\left\langle f_0\right\rangle\right)^2$ gives the result in $L^2(\mu^{-1/2})$ and the conclusion of the Corollary is a direct consequence of it because of Remark~\ref{rem:mu}.
\end{proof}

\bigskip

%%%%%%%%%%%%%%%%%%%%%%%%%%%%%%%%%%%%%%%%%%%%%%%%%%%%%%%%%%%%%%%%

\section{Semigroup decay in $L^1(\langle x \rangle^k)$} 
\label{sec:L1}
\setcounter{equation}{0}
\setcounter{theo}{0}

%%%%%%%%%%%%%%%%%%%%%%%%%%%%%%%%%%%%%%%%%%%%%%%%%%%%%%%%%%%%%%%

\subsection{Splitting of the operator}
We would like to get a splitting of our operator $\LL$ into two operators which satisfies hypothesis of Theorem~\ref{th:spectralgap} with $E=L^2(\widetilde{\mu}^{-1/2})$ and $\EE=L^1(\langle x \rangle^k)$ with $k<\alpha$. In what follows, we denote $m(x) := \langle x \rangle^k$, $k<\alpha$.
\begin{lem} \label{lem:dissipL1}
Consider $a \in (-\min(k,\lambda),0)$ where $\lambda>0$ is defined in Corollary~\ref{cor:entropie}. There exist two operators $\AA$ and $\BB$ which satisfy the following conditions:
\begin{itemize}
\item[{\bf (i)}]  $\LL = \AA + \BB$,
\item[{\bf (ii)}]  $\AA \in \BBB(L^2(\widetilde{\mu}^{-1/2}))$ and  $\AA \in \BBB(L^1(m))$,
\item[{\bf (iii)}] $\BB - a$ is dissipative on $L^2(\widetilde{\mu}^{-1/2})$ and $L^1(m)$.  
\end{itemize}
\end{lem}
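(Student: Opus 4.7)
My plan is to mimic the GMM splitting philosophy: take $\AA$ to be multiplication by a compactly supported cutoff of tunable amplitude, and $\BB := \LL - \AA$. Precisely, fix a smooth $\chi_R$ with $\chi_R\equiv 1$ on $B_R$ and supported in $B_{2R}$, and set $\AA f := M\chi_R f$, with $M,R>0$ chosen at the end of the proof. Item (i) is then immediate, and item (ii) holds because multiplication by a bounded compactly supported function is bounded on any weighted Lebesgue space, both on $L^1(m)$ and on $L^2(\widetilde{\mu}^{-1/2})$. All the work lies in item (iii); the guiding idea in both spaces is the same: compute the natural quadratic form against the dual weight, extract a non-positive (Dirichlet-like) contribution, and arrange that the remaining pointwise bracket is $\le a$ once $R$ and $M$ are large enough.

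For $\EE = L^1(m)$, the drift integrates by parts (using $x\cdot\nabla|f| = \mathrm{sgn}(f)\,x\cdot\nabla f$) into $\int \mathrm{div}(xf)\,\mathrm{sgn}(f)\,m = -\int |f|\,x\cdot\nabla m\,dx$. For the fractional Laplacian I invoke the pointwise Kato inequality $I[f]\,\mathrm{sgn}(f)\le I[|f|]$, evident from the integral representation (\ref{eq:LF}), and then transfer the operator onto the weight via Lemma~\ref{lem:IPP}: $\int I[|f|]\,m = \int |f|\,I[m]$. Collecting,
\begin{equation*}
\int (\BB f)\,\mathrm{sgn}(f)\,m\,dx \,\le\, \int |f|\,m\left[\frac{I[m]}{m} - \frac{x\cdot\nabla m}{m} - M\chi_R\right]dx.
\end{equation*}
With $m=\langle x\rangle^k$ one has $x\cdot\nabla m/m = k|x|^2/\langle x\rangle^2 \to k$, and $I[m]/m \to 0$ at infinity by a scaling argument exploiting $k<\alpha$ (at infinity $I[\langle x\rangle^k]$ behaves like $-(-\Delta)^{\alpha/2}|x|^k$, which is $(k-\alpha)$-homogeneous and therefore decays). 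The bracket tends to $-k$, and picking first $R$ large so that it is $\le a$ for $|x|\ge R$, then $M$ large enough to absorb its bounded values on $|x|\le R$, delivers dissipativity for any $a\in(-k,0)$.

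For $E = L^2(\widetilde{\mu}^{-1/2})$, the drift produces $\int f^2\,\widetilde{\mu}^{-1}\left[\tfrac{d}{2} - \tfrac{d+\alpha}{2}\,|x|^2/\langle x\rangle^2\right]dx$, whose coefficient tends to $-\alpha/2$ at infinity. For the fractional Laplacian I would symmetrise the double integral and substitute $g := f\,\widetilde{\mu}^{-1/2}$, $w := \widetilde{\mu}^{1/2}$:
\begin{equation*}
\int I[f]\,f\,\widetilde{\mu}^{-1}\,dx = -\tfrac12\!\int\!\!\int\!\frac{(g(x)-g(y))^2}{|x-y|^{d+\alpha}}\,dxdy + \tfrac12\!\int\!\!\int\!\frac{g(x)g(y)\,(w(x)-w(y))^2}{w(x)w(y)|x-y|^{d+\alpha}}\,dxdy.
\end{equation*}
The first (Dirichlet-form) term is non-positive and discarded; Cauchy--Schwarz bounds the second by $\int g^2\,\psi\,dx$ with $\psi(x) := \int (w(x)-w(y))^2/(w(x)w(y)|x-y|^{d+\alpha})\,dy$. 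A rescaling $y=x+|x|z$ shows $\psi(x) = O(\langle x\rangle^{-\alpha})$, so $\psi$ is bounded and vanishes at infinity. The effective bracket again tends to a negative limit at infinity, and a further choice of $M,R$ yields the desired dissipativity on $L^2(\widetilde{\mu}^{-1/2})$.

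The main obstacle will be the precise asymptotic analysis of the two correction kernels, $I[\langle x\rangle^k]/\langle x\rangle^k$ in the $L^1$ estimate and $\psi$ in the $L^2$ estimate, both requiring scaling arguments that exploit $\alpha-k>0$ (respectively the integrability of $(w(x)-w(y))^2$ against the L\'evy kernel). A secondary subtle point is reconciling the $L^2$ asymptotic rate of $-\alpha/2$ produced by the bare computation above with the rate $-\lambda$ appearing in the statement; this should follow from a refined use of the entropy-based gap of Corollary~\ref{cor:entropie}, since $\widetilde{\mu}$ is only a polynomial surrogate for the true equilibrium $\mu$ (cf.\ Remark~\ref{rem:mu}).
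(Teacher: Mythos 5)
Your choice of splitting ($\AA = M\chi_R$, $\BB = \LL - M\chi_R$) and your treatment of items (i), (ii) and of the $L^1(m)$ dissipativity coincide with the paper's proof: Kato's inequality from the convexity of $s\mapsto|s|$, transfer of $I$ onto the weight via Lemma~\ref{lem:IPP}, and $\psi_{m,1}=I[m]/m - x\cdot\nabla m/m\to -k$ at infinity (the paper makes your scaling heuristic for $I[m]/m\to0$ rigorous by splitting into $|x-y|\le1$, treated by a Taylor expansion, and $|x-y|\ge1$, treated by the $k/2$-H\"older continuity of $s\mapsto s^{k/2}$, both using $k<\alpha$).

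The $L^2(\widetilde{\mu}^{-1/2})$ part, however, contains a genuine gap. Your ground-state transform is algebraically correct, but the correction potential $\psi(x)=\int\frac{(w(x)-w(y))^2}{w(x)w(y)}\frac{dy}{|x-y|^{d+\alpha}}$ with $w=\widetilde{\mu}^{1/2}=\langle\cdot\rangle^{-(d+\alpha)/2}$ is identically $+\infty$ whenever $\alpha\le d$, hence in every dimension $d\ge2$ and for $d=1$ when $\alpha\le1$: for fixed $x$ and $|y|\to\infty$ the integrand behaves like $w(x)\,|y|^{(d+\alpha)/2-(d+\alpha)}=w(x)\,|y|^{-(d+\alpha)/2}$, which is not integrable at infinity since $(d+\alpha)/2\le d$. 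The culprit is the factor $1/w(y)$: the weight $\widetilde{\mu}^{1/2}$ decays with exponent $(d+\alpha)/2$, which exceeds $\alpha$, too fast for the L\'evy kernel to tolerate; so the claimed bound $\psi=O(\langle x\rangle^{-\alpha})$ cannot hold and the Cauchy--Schwarz step collapses. Relatedly, your closing remark defers the appearance of the rate $\lambda$ to ``a refined use'' of Corollary~\ref{cor:entropie}; in the paper that corollary is not a refinement but the entire $L^2$ argument. The paper writes $\int\BB f\,f\,\mu^{-1}\le\int\LL f\,f\,\mu^{-1}$ (simply dropping $-M\int\chi_Rf^2\mu^{-1}\le0$), uses $\LL\mu=0$ to reduce to the zero-mean part $f-\mu\langle f\rangle$, invokes the spectral gap $-\lambda$ of Corollary~\ref{cor:entropie} in $L^2(\mu^{-1/2})$, and only then passes to the surrogate weight via $\mu\le C\widetilde{\mu}$. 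You should replace your direct computation by this argument (or otherwise circumvent the divergence of $\psi$); as written, the $L^2$ half of item (iii) is not established.
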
 

\begin{proof}
We are going to estimate the integral $\int \LL f \, \text{sign}(f) \, m$ with $f$ a Schwartz function. The inequality obtained will also hold for any $f \in L^1(m)$ because of the density of $\SS(\R^d)$ in $L^1(m)$. We split the integral into two parts:

\begin{align*}
\int_{\R^d} \LL f  \, \text{sign}(f) \, m 
&= \int_{\R^d} I [f] \, \text{sign}(f) \, m + \int_{\R^d} \text{div}(xf) \, \text{sign}(f) \, m \\
&:= T_1 + T_2.
\end{align*}

As far as $T_1$ is concerned, we introduce the function $\Phi(s):= |s|$ on $\R^d$ which is convex and its derivative is $\Phi'(s)=\text{sign}(s)$. We also introduce the notation $K(x) := |x|^{-d-\alpha}$. Let us do the following computation:
\begin{align*}
&\quad \int_{\R^d} (f(y)-f(x)) \, K(x-y) \, dy \, \,  \text{sign}(f(x)) \\
&=  \int_{\R^d} \left( (f(y)-f(x)) \Phi'(f(x)) + \Phi(f(x)) - \phi(f(y)) \right) \, K(x-y) \, dy \\
&\quad + \int_{\R^d} \left( \Phi(f(y)) - \Phi(f(x)) \right) \, K(x-y) \, dy \\
&\leq \int_{\R^d} \left( |f|(y) - |f|(x) \right) \, K(x-y) \, dy = I[|f|](x),
\end{align*}
where the last inequality comes from the convexity of $\Phi$. 
We hence deduce that 
$$
T_1 \leq \int_{\R^d} I[|f|] \, m = \int_{\R^d} |f| \, I[m] = \int_{\R^d} |f| \, m \, \frac{I[m]}{m},
$$
because of Lemma~\ref{lem:IPP}. 

Let us now deal with $T_2$. Performing integrations by parts, we obtain:
\begin{align*}
T_2 &=\int_{\R^d} \text{div}(xf) \, \text{sign}(f) \, m \\
&= d \, \int_{\R^d} |f| \, m + \int_{\R^d} x \cdot \nabla f \, \text{sign} f \, m \\
&= d \, \int_{\R^d} |f| \, m + \int_{\R^d} x \cdot \nabla |f| \, m \\
&= d \, \int_{\R^d} |f| \, m -d \, \int_{\R^d} |f| \, m - \int_{\R^d} |f| \, x \cdot \nabla m \\
&= - \int_{\R^d} |f| \, m \, \frac{x \cdot \nabla m}{m}
\end{align*}

We now introduce $\psi_{m,1} := I\left[m\right]/m - x \cdot \nabla m/m$. Let us study the behavior of $\psi_{m,1}$ at infinity. First, $x \cdot \nabla m(x)/m(x)$ tends to $k$ as $|x|$ tends to infinity. Then, we prove that $I[m](x)/m(x)$ tends to $0$ as $|x|$ tends to infinity. We use both representations~(\ref{eq:LF}) and~(\ref{eq:LF2}) to split $I[m](x)$ into two parts:
\begin{align*}
I[m](x) &= \frac{1}{2} \int_{|z| \leq 1} \left(m(x+z) + m(x-z) - 2m(x)\right) \, K(z) \, dz \\
&\quad +  \int_{|x-y| \geq 1} \left(m(y) - m(x)\right) \, K(x-y) \, dy \\
&:=  I_1[m](x) + I_2[m](x).
\end{align*}

Concerning $I_1[m]$, using a Taylor expansion, we obtain:
\begin{align*}
|m(x+z) + m(x-z) - 2m(x)| &\leq \sup_{|z| \leq 1} \|D^2m(x+z)\|_{\infty} \, |z|^2 \\
&\leq C \langle x \rangle^{k-2} |z|^2,
\end{align*}
from which we deduce that 
\beqn \label{eq:I1(m)}
I_1[m](x) \leq C \langle x \rangle^{k-2} \int_{|z| \leq 1} \frac{dz}{|z|^{d+\alpha-2}}.
\eeqn

Concerning $I_2[m]$, let us introduce the function $\psi(s) := s^{k/2}$ on $\R^+$. Using the fact that $\psi$ is $k/2$-H\"{o}lder continuous on $\R^+$ because $k/2 \leq 1$, we obtain for any $x$, $y \in \R^d$:
$$
\left|\psi(1+ |x|^2) - \psi(1+|y|^2)\right| \leq C \left| |x|^2-|y|^2 \right|^{k/2}
$$
for some $C>0$. We deduce the following inequalities:
\begin{align*}
|m(x)-m(y)| &\leq C \, ||x|-|y||^{k/2} (|x|+|y|)^{k/2} \\
&\leq C \, |x-y|^{k/2} (|x|+|x-y|+ |x|)^{k/2} \\
&\leq 2\, C \left(|x-y|^{k/2} |x|^{k/2} +  |x-y|^k \right).
\end{align*}
Finally, we obtain the following estimate on $I_2[m]$:
\beqn \label{eq:I2(m)}
I_2[m](x) \leq C \left(|x|^{k/2} \int_{|z|\geq1} \frac{dz}{|z|^{d+\alpha-k/2}} +  \int_{|z|\geq1} \frac{dz}{|z|^{d+\alpha-k}}\right),
\eeqn
where we notice that the integrals are convergent because $k<\alpha$.

Gathering~(\ref{eq:I1(m)}) and~(\ref{eq:I2(m)}), we deduce that $I[m]/m$ tends to $0$ at infinity. Finally, we obtain:
$$
 \int_{\R^d} \LL f \, \text{sign}f\,  m \leq \int_{\R^d} |f| \, m \, \psi_{m,1} \quad \text{with} \quad \lim_{|x| \rightarrow \infty} \psi_{m,1}(x) = -k<0.
$$ 

We introduce the smooth function $\chi_R$ ($R>0$) which is nonnegative, supported on $|x| \leq 2R$ and such that $\chi_R(x) = 1$ for $|x| \leq R$.  For any $a >-\min(\lambda,k)$, we may find $M$ and $R$ large enough so that
\beqn \label{eq:psi}
\forall \, x \in \R^d, \quad \psi_{m,1}(x) - M \chi_R (x) \leq a.
\eeqn
Indeed, if we choose $R$ large enough such that for any $|x| \geq R$, $\psi_{m,1}(x) \leq a$ and \linebreak $M := \max_{|x|\leq R} \psi_{m,1}(x) - a$, we have~(\ref{eq:psi}). 

We then introduce $\AA := M\chi_R$ and $\BB := \LL - M \chi_R$. We finally obtain:
$$
\begin{aligned}
\int_{\R^d} \left(\BB-a\right) f \, \text{sign} f \, m &= \int_{\R^d} \left(\LL - M \chi_R - a\right) f \, \text{sign} f \, m \\
&\leq \int_{\R^d} \left(\psi_{m,1} - M \chi_R - a\right) \, |f| \, m \\
&\leq 0,
\end{aligned}
$$
which implies that $\BB-a$ is dissipative on $L^1(m)$. 

Let us now check that $\BB-a$ is dissipative on $L^2(\widetilde{\mu}^{-1/2})$: 
$$
\begin{aligned}
\int_{\R^d} \BB f \, f \,  \mu^{-1} 
&= \int_{\R^d} \LL f \, f \,  \mu^{-1} - M \int_{\R^d} \chi_R \, f^2  \mu^{-1} \\
&\leq \int_{\R^d} \LL f \, f \,  \mu^{-1} \\
&= \int_{\R^d} \LL (f - \mu \langle f \rangle) \, f\,  \mu^{-1} + \int_{\R^d} \langle f \rangle \, \LL \mu \, \mu^{-1} \\
&=  \int_{\R^d} \LL (f - \mu \langle f \rangle) \, (f - \mu \langle f \rangle)\,  \mu^{-1} \\
&\leq -\lambda \, \|f\|^2_{L^2(\mu^{-1/2})},
\end{aligned}
$$
where the last inequality comes from Corollary~\ref{cor:entropie}. We thus deduce that $\BB-a$ is dissipative on $L^2(\mu^{-1/2})$ using that $-\lambda \leq a$. To conclude that we also have the dissipativity of $\BB-a$ on $L^2(\widetilde{\mu}^{-1/2})$, we use the fact that there exists $C>0$ such that $\mu \leq C \widetilde{\mu}$ on $\R^d$.

We can now conclude. This splitting $\LL = \AA + \BB$ fulfills conditions (i), (ii) and (iii) of Lemma~\ref{lem:dissipL1}. Indeed, it is immediate to check assumption (ii) because $\AA$ is a truncation operator. 
\end{proof}

\medskip 

\subsection{Regularization properties of $\left( \AA \, e^{\BB t} \right)^{(*n)}$}
We are now going to show that there exists $n \in \N$ such that $\left(\AA \, e^{\BB t}\right)^{(*n)}$ has a regularizing effect. In order to get such a result, we are going to use the negative term  in the computations done to get the dissipativity of $\BB$. Let us state a result which is going to be useful to get an estimate on this negative term. 

\begin{lem}[Fractional Nash inequality] \label{lem:nash}
Consider $\alpha \in (0,2)$. There exists a constant $C>0$ such that for any $g \in L^1(\R^d) \cap H^{\alpha/2}(\R^d)$, we have:
$$
\int_{\mathbb{R}^d} {\left|g(x)\right| ^2 \, dx} \leq C \, \left(\int_{\mathbb{R}^d} {\left|g(x)\right|  \, dx}\right)^{\frac{2\alpha}{d+\alpha}} 
\left(\int_{\mathbb{R}^d} {\int_{\mathbb{R}^d} {\frac{\left|g(x) - g(y)\right|^2}{\left|x-y\right|^{d+\alpha}}\, dy}\, dx} \right)^{\frac{d}{d+\alpha}} .
$$
\end{lem}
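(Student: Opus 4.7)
The plan is to mimic Nash's classical argument on the Fourier side, using the fractional Sobolev seminorm instead of $\|\nabla g\|_{L^2}^2$. First, I would rewrite the three quantities appearing in the inequality using the Fourier transform. By Plancherel we have $\|g\|_{L^2}^2 = c_d\,\|\widehat g\|_{L^2}^2$, while the $L^1$ norm controls the Fourier transform pointwise: $\|\widehat g\|_{L^\infty} \leq \|g\|_{L^1}$. Finally, combining Lemma~\ref{lem:sobolevfrac} with Lemma~\ref{lem:LFFourier} and Plancherel, the double integral admits the representation
$$
\int_{\R^d}\int_{\R^d} \frac{|g(x)-g(y)|^2}{|x-y|^{d+\alpha}}\, dy\, dx \;=\; C \int_{\R^d} |\xi|^{\alpha}\, |\widehat g(\xi)|^2\, d\xi,
$$
which I shall abbreviate by $D(g)$.

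Next, for a parameter $R>0$ to be chosen, I would split the frequency integral as
$$
\|\widehat g\|_{L^2}^2 \;=\; \int_{|\xi|\le R} |\widehat g(\xi)|^2\, d\xi \;+\; \int_{|\xi|> R} |\widehat g(\xi)|^2\, d\xi.
$$
For the low-frequency piece I would use $|\widehat g(\xi)|\le \|g\|_{L^1}$ to obtain a bound of order $R^d \|g\|_{L^1}^2$. For the high-frequency piece I would use the trivial inequality $1 \le |\xi|^\alpha / R^\alpha$ on $\{|\xi|>R\}$, which gives a bound of order $R^{-\alpha} D(g)$ after extending the integral back to all of $\R^d$.

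Combining the two bounds yields $\|g\|_{L^2}^2 \le C_1 R^d \|g\|_{L^1}^2 + C_2 R^{-\alpha} D(g)$ for every $R>0$. The last step is to optimize in $R$: choosing $R$ so that the two contributions are of the same order, namely $R = \bigl(D(g)/\|g\|_{L^1}^2\bigr)^{1/(d+\alpha)}$, produces the claimed exponents $\tfrac{2\alpha}{d+\alpha}$ on $\|g\|_{L^1}$ and $\tfrac{d}{d+\alpha}$ on $D(g)$. One should of course check this optimization is legitimate (e.g., the case $D(g)=0$ or $\|g\|_{L^1}=0$ is trivial as $g\equiv 0$). There is no real obstacle here; the one small point requiring a word of care is the Fourier identity for $D(g)$, whose proof goes through Plancherel applied to $(-\Delta)^{\alpha/4} g$ and is essentially the content of Lemma~\ref{lem:sobolevfrac} combined with Lemma~\ref{lem:LFFourier}.
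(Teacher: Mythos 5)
Your proposal is correct and follows essentially the same route as the paper: Plancherel, the bound $\|\widehat g\|_{L^\infty}\le\|g\|_{L^1}$ on low frequencies, the Fourier representation of the fractional Sobolev seminorm (Lemma~\ref{lem:sobolevfrac}) on high frequencies, and optimization in the cutoff radius $R$. No gaps.
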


\begin{proof}
We use the Plancherel formula to get the following equality for any $R>0$:
$$
\int_{\mathbb{R}^d} {\left|g(x)\right| ^2 \, dx} = C \left(\int_{|\xi| \leq R} {\left|\widehat{g}(\xi)\right| ^2 \, d\xi} + \int_{|\xi| \geq R} {\left|\widehat{g}(\xi)\right| ^2 \, d\xi}\right) .
$$
The first part of the integral can be bounded as follows:
$$ 
\int_{|\xi| \leq R} {\left|\widehat{g}(\xi)\right| ^2 \, d\xi} \leq R^d \left\|\widehat{g}\right\|_{L^\infty}^2 \leq  R^d \left\|g\right\|_{L^1}^2. 
$$
As far as the second part is concerned, we use Lemma~\ref{lem:sobolevfrac}
$$
\int_{|\xi| \geq R} {\left|\widehat{g}(\xi)\right| ^2 \, d\xi} \leq \frac{1}{R^\alpha} \int_{\mathbb{R}^d} {\left|\xi\right|^{\alpha} \left|\widehat{g}\left(\xi\right)\right|^2 \, d\xi} = \frac{C}{R^\alpha} \int_{\mathbb{R}^d} {\int_{\mathbb{R}^d} {\frac{\left(g(x) - g(y)\right)^2}{\left|x-y\right|^{d+\alpha}}}} \, dy \, dx. 
$$
We denote
$$
a = \left\|g\right\|_{L^1}^2 \text{ et } b = C \int_{\mathbb{R}^d} {\int_{\mathbb{R}^d} {\frac{\left(g(x) - g(y)\right)^2}{\left|x-y\right|^{d+\alpha}}}} \, .
$$
and the aim is to minimize the function $\phi(R) := a R^d + bR^{-\alpha}$ to get an optimal inequality. We compute
$$ 
\phi'(R) = 0 \Longleftrightarrow adR^{d-1} - \alpha b \frac{1}{R^{\alpha +1 }} = 0
\Longleftrightarrow R = \left(\frac{\alpha b}{a d }\right)^{\frac{1}{d+\alpha}} 
$$
and
$$
\phi \left(\left(\frac{\alpha b}{a d }\right)^{\frac{1}{d+\alpha}}\right) = C a^{\frac{\alpha}{d+\alpha}} b^{\frac{d}{d+\alpha}},
$$
which concludes the proof.
\end{proof}

Let us now prove the following lemma which is the cornerstone of the proof of the regularizing effect of $\left(\AA \, e^{\BB t}\right)^{(*n)}$. We introduce the following measure: 
$$
m_0(x) := \langle x \rangle^{k_0} \quad \text{with} \quad k_0 < \min(k, \alpha/2).
$$
Let us notice that this assumption on $k_0$ allows us to define $I[m_0^p]$ for any $p \in [1,2]$ and that $m_0$ satisfies $L^2(\widetilde{\mu}^{-1/2}) \subset L^q(m_0)$ for any $q \in [1,2]$.  
\begin{lem}
\label{lem:regular}
There are $b,C>0$ such that for any $p$ and $q$, $1\leq p \leq q \leq 2$, we have:
$$
\forall \, t \geq 0, \quad \left\| e^{\BB t} \, f \right\|_{L^q(m_0)} \leq \frac{C e^{bt}}{t^{\frac{d}{\alpha} \left(\frac{1}{p} - \frac{1}{q} \right)}} \left\|f\right\|_{L^p(m_0)} .
$$
\end{lem}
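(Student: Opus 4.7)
The plan is to combine an on-diagonal boundedness in each $L^p(m_0)$ with a weighted fractional Nash inequality at the endpoint $L^1\to L^2$, and then to recover the full range $L^p(m_0)\to L^q(m_0)$ by Riesz--Thorin interpolation.

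I would first establish that, for every $p\in[1,2]$, $e^{\BB t}$ is bounded on $L^p(m_0)$ with norm $Ce^{b_p t}$ for some $b_p\in\R$. For $p=1$ this is contained in Lemma~\ref{lem:dissipL1}. For $p\in(1,2]$, differentiating $\|f_t\|_{L^p(m_0)}^p$ along $f_t=e^{\BB t}f$ and using the $L^p$ analogue of the convexity trick from the proof of Lemma~\ref{lem:dissipL1},
\begin{equation*}
I[f](x)\,|f|^{p-2}(x)f(x)\;\leq\;\tfrac{1}{p}\,I\bigl[|f|^p\bigr](x),
\end{equation*}
combined with Lemma~\ref{lem:IPP}, yields $\int I[f]\,|f|^{p-2}f\,m_0^p\,dx\leq \tfrac{1}{p}\int |f|^p\,I[m_0^p]\,dx$. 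The right-hand side is well defined since $pk_0<\alpha$ (from $k_0<\alpha/2$ and $p\leq 2$), and $I[m_0^p]/m_0^p$ is bounded on $\R^d$ by the same computation as in Lemma~\ref{lem:dissipL1}. Treating the transport and $-M\chi_R$ contributions exactly as in Lemma~\ref{lem:dissipL1} and invoking Gronwall closes this step.

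I would then prove the endpoint $\|e^{\BB t}f\|_{L^2(m_0)}\leq Ce^{bt}\,t^{-d/(2\alpha)}\,\|f\|_{L^1(m_0)}$. Symmetrizing $\int I[f]\,f\,m_0^2$ in $x\leftrightarrow y$ gives
\begin{equation*}
\int I[f]\,f\,m_0^2\,dx \;=\; -\tfrac{1}{2}\int\!\!\int \frac{(f(x)-f(y))^2\,m_0^2(x)}{|x-y|^{d+\alpha}}\,dx\,dy \;+\; R(f),
\end{equation*}
with a commutator $R(f)$ involving $m_0^2(x)-m_0^2(y)$. Using Cauchy--Schwarz together with the joint H\"older/Lipschitz estimates $|m_0(x)-m_0(y)|\leq C\min(|x-y|^{k_0},\langle y\rangle^{k_0-1}|x-y|)$, one absorbs $|R(f)|$ into a small fraction of the dissipation plus a zeroth-order term $C\|f\|_{L^2(m_0)}^2$. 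Adding the transport contribution and applying the fractional Nash inequality (Lemma~\ref{lem:nash}) to $g=m_0 f_t$ produces
\begin{equation*}
\tfrac{d}{dt}\|f_t\|_{L^2(m_0)}^2 \;\leq\; C\|f_t\|_{L^2(m_0)}^2 \;-\; c\,\frac{\|f_t\|_{L^2(m_0)}^{2+2\alpha/d}}{\|f_t\|_{L^1(m_0)}^{2\alpha/d}}.
\end{equation*}
Inserting the bound $\|f_t\|_{L^1(m_0)}\leq Ce^{bt}\|f\|_{L^1(m_0)}$ from Step~1, this becomes a Bernoulli-type ODE for $u(t)=\|f_t\|_{L^2(m_0)}^2$ whose integration (with no initial bound on $u$) yields the claimed short-time decay; for large times one combines this with the $L^2(m_0)\to L^2(m_0)$ bound via the splitting $e^{\BB t}=e^{\BB(t-1)}e^{\BB}$.

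Finally I would interpolate. Setting $U_t:=M_{m_0}\,e^{\BB t}\,M_{m_0^{-1}}$ transfers the previous two estimates to the standard Lebesgue spaces $L^p(\R^d)$: Step~1 becomes $\|U_t\|_{L^p\to L^p}\leq Ce^{bt}$ for $p\in[1,2]$ and Step~2 becomes $\|U_t\|_{L^1\to L^2}\leq Ce^{bt}/t^{d/(2\alpha)}$. A first Riesz--Thorin interpolation between the latter and $\|U_t\|_{L^2\to L^2}$ produces $\|U_t\|_{L^p\to L^2}\leq Ce^{bt}/t^{(d/\alpha)(1/p-1/2)}$, and a second Riesz--Thorin interpolation between $\|U_t\|_{L^p\to L^p}$ and $\|U_t\|_{L^p\to L^2}$ gives the announced $L^p(m_0)\to L^q(m_0)$ bound for every $1\leq p\leq q\leq 2$. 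The main obstacle is the commutator bound in Step~2: the weight-difference term $R(f)$ has to be absorbed into the dissipation, which is precisely the point at which the restriction $k_0<\alpha/2$ is essential; once this is in place, the remaining work is standard bookkeeping on top of the tools assembled in Sections~\ref{sec:LF}--\ref{sec:abstractthm}.
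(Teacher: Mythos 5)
Your proposal is correct and follows essentially the same route as the paper: an $L^p(m_0)\to L^p(m_0)$ growth bound for all $p\in[1,2]$ via the convexity identity for $\Phi(s)=|s|^p/p$, an $L^1(m_0)\to L^2(m_0)$ hypercontractivity estimate obtained by commuting the weight into the Dirichlet form, controlling the weight-difference remainder, applying the fractional Nash inequality to $g=m_0f_t$ and integrating the resulting Bernoulli-type differential inequality, and finally Riesz--Thorin interpolation. The only differences (two successive interpolations after conjugating by $M_{m_0}$ instead of one three-exponent formula, and a direct ODE integration instead of the paper's stopping-time $\tau$) are presentational.
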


\begin{proof}
For $p\in [1,2]$, we denote 
$$
\psi_{m_0,p} := \frac{I[m_0^p]}{p\, m_0^p} + d\, \frac{p-1}{p} - \frac{x \cdot \nabla(m_0^p)}{p\, m_0^p}
$$
and we introduce $b \in \R$ such that $\sup_{q \in [1,2]} \psi_{m_0,q} \leq b$. 

Let us prove that for any $p \in [1,2]$, we have:
\beqn \label{eq:LpLp}
\forall \, t \geq 0, \quad \|e^{\BB t} f \|_{L^p(m_0)} \leq e^{bt} \|f\|_{L^p(m_0)}.
\eeqn 
We now do same kind of computations as in the proof of Lemma~\ref{lem:dissipL1}:
\begin{align*}
\int_{\R^d} \LL f \, |f|^{p-1} \, \text{sign}f \, m_0^p &= \int_{\R^d} I[f] \, |f|^{p-1} \, \text{sign}f \, m_0^p + \int_{\R^d} \text{div}(xf) \, |f|^{p-1} \, \text{sign}f \, m_0^p \\
&:= \widetilde{T}_1 + \widetilde{T}_2.
\end{align*}
As far as $\widetilde{T}_1$ is concerned, we introduce the function $\Phi(x):= |x|^p/p$ on $\R^d$ which is convex and its derivative is $\Phi'(x)=|x|^{p-1} \text{sign}(x)$. Let us do the following computation:
\begin{align*}
& \quad \int_{\R^d} (f(y)-f(x)) \, K(x-y) \, dy \,  |f|^{p-1}(x) \, \text{sign}(f(x)) \\
&=  \int_{\R^d} \left( (f(y)-f(x)) \Phi'(f(x)) + \Phi(f(x)) - \phi(f(y)) \right) \, K(x-y) \, dy \\
&\quad+ \int_{\R^d} \left( \Phi(f(y)) - \Phi(f(x)) \right) \, K(x-y) \, dy \\
&\leq \int_{\R^d} \frac{1}{p} \left( |f|^p(y) - |f|^p(x) \right) \, K(x-y) \, dy = \frac{1}{p}I[|f|^p](x),
\end{align*}
where the last inequality comes from the convexity of $\Phi$. 
We hence deduce that 
$$
\widetilde{T}_1 \leq \int_{\R^d} I[|f|^p] \, m_0 = \frac{1}{p}\int_{\R^d} |f|^p \, I[m_0^p] = \int_{\R^d} |f|^p \, m_0^p \, \frac{I[m_0^p]}{p \, m_0^p}.
$$
Concerning $\widetilde{T}_2$, using an integration by part, we obtain:
$$
\widetilde{T}_2 = \int_{\R^d} \left[d \, \frac{p-1}{p} - \frac{x \cdot \nabla (m_0^p)}{p\, m_0^p} \right] \, |f|^p \, m_0^p.
$$
Finally, the previous estimates imply that 
$$
\int_{\R^d} \BB f \, |f|^{p-1} \, \text{sign}f \, m_0^p \leq \int_{\R^d} (\psi_{m_0,p} - M \chi_R)\, |f|^p \, m_0^p \leq b \int_{\R^d} |f|^p \, m_0^p
$$
using the definition of $b$. This implies the estimate~(\ref{eq:LpLp}).

In order to establish the gain of integrability estimate, we have to use the non positive term in a sharper way, i.e. not merely
the fact that it is non-positive. It is enough to do that in the simplest case when $p = 2$.

Let us consider a solution $f_t$ of the equation 
$$ 
\partial_t f_t = \mathcal{B} f_t \, , \quad f_0 = f \in L^2(m_0) .
$$
The previous computation involving the function $\Phi(x)$ is simpler in the case $p=2$ and becomes:
\begin{align*}
\int_{\R^d} \BB f \, f \, m_0^2 
&= -\frac{1}{2} \int_{\R^d} {\int_{\R^d} {\frac{\left|f(x) - f(y)\right|^2}{\left|x-y\right|^{d+\alpha}} \, dy} \, m_0^2(x) \, dx} + \int f^2 \, m_0^2 \left( \psi_{m_0,2} - M \chi_R \right) \\
& \leq -\frac{1}{2} \int_{\R^d} {\int_{|x-y| \leq 1} {\frac{\left|f(x) - f(y)\right|^2}{\left|x-y\right|^{d+\alpha}} \, dy} \, m_0^2(x) \, dx} + b \int f^2 \, m_0^2
\end{align*}
Let us deal with the negative part of the last inequality. 
\begin{align*}
& \quad  \int_{\mathbb{R}^d} {\int_{\left|x-y\right|\leq 1} {\frac{\left|f_t(x) - f_t(y)\right|^2}{\left|x-y\right|^{d+\alpha}} \, dy} \, m_0^2(x) \, dx} \\
&=   \int_{\mathbb{R}^d} {\int_{\left|x-y\right|\leq 1} {\frac{\left|f_t(x)m_0(x) - f_t(y)m_0(y) + f_t(y) \left(m_0(y) - m_0(x)\right)\right|^2}{\left|x-y\right|^{d+\alpha}} \, dy} \, m_0^2(x) \, dx} \\
&\geq  \frac{1}{2} \int_{\mathbb{R}^d} {\int_{\left|x-y\right|\leq 1} {\frac{\left|f_t(x)m_0(x) - f_t(y)m_0(y)\right|^2}{\left|x-y\right|^{d+\alpha}} \, dy} \, m_0^2(x) \, dx} \\
&\quad - \int_{\mathbb{R}^d} {\int_{\left|x-y\right|\leq 1} {\frac{\left|m_0(x) - m_0(y)\right|^2}{\left|x-y\right|^{d+\alpha}} \, dx} \, f_t^2(y) \, dy} \\
&\geq \frac{1}{2} \int_{\mathbb{R}^d} {\int_{\mathbb{R}^d} {\frac{\left|f_t(x)m_0(x) - f_t(y)m_0(y)\right|^2}{\left|x-y\right|^{d+\alpha}} \, dy} \, m_0^2(x) \, dx} \\
&\quad - \frac{1}{2} \int_{\mathbb{R}^d} {\int_{\left|x-y\right|\geq 1} {\frac{\left|f_t(x)m_0(x) - f_t(y)m_0(y)\right|^2}{\left|x-y\right|^{d+\alpha}} \, dy} \, m_0^2(x) \, dx}  \\
&\quad- \int_{\mathbb{R}^d} {\int_{\left|x-y\right|\leq 1} {\frac{\left|m_0(x) - m_0(y)\right|^2}{\left|x-y\right|^{d+\alpha}} \, dx} \, f_t^2(y) \, dy} 
\end{align*}
We treat the first term using Lemma~\ref{lem:nash} with $g = f_t m_0$:
\begin{equation} 
\label{eq:nash1}
\int_{\mathbb{R}^d} {\int_{\mathbb{R}^d} {\frac{\left|f_t(x)m_0(x) - f_t(y)m_0(y)\right|^2}{\left|x-y\right|^{d+\alpha}} \, dy} \,  dx} 
\geq C \left(\int_{\mathbb{R}^d} {\left|f_t\right|^2 m_0^2}\right)^{\frac{d+\alpha}{d}} \, \left(\int_{\mathbb{R}^d} {\left|f_t\right| m_0} \right) ^{-\frac{2 \alpha}{d}}  .
\end{equation}
We crudely bound the second term from above:
\beqn  \label{eq:nash2}
\begin{aligned}
& \quad \int_{\mathbb{R}^d} {\int_{\left|x-y\right|\geq 1} {\frac{\left|f_t(x)m_0(x) - f_t(y)m_0(y)\right|^2}{\left|x-y\right|^{d+\alpha}} \, dy} \, m_0^2(x) \, dx} \\
&\leq   C \left(\int_{\mathbb{R}^d} {\int_{\left|x-y\right|\geq 1} {\frac{\left|f_t(x)m_0(x)\right|^2}{\left|x-y\right|^{d+\alpha}} \, dy} \, dx} +
\int_{\mathbb{R}^d} {\int_{\left|x-y\right|\geq 1} {\frac{\left|f_t(y)m_0(y)\right|^2}{\left|x-y\right|^{d+\alpha}} \, dx} \, dy} \right) \\
&\leq  C \int_{\mathbb{R}^d} {\left|f_t\right|^2 m_0^2}. 
\end{aligned}
\eeqn
Finally, the third term is bounded using the fact that $\sup_{\bar{B}(y,1)} | \nabla m_0|^2 \leq  C \, m_0^2(y)$:
\beqn  \label{eq:nash3}
\begin{aligned}
& \quad \int_{\mathbb{R}^d} {\int_{\left|x-y\right|\leq 1} {\frac{\left|m_0(x) - m_0(y)\right|^2}{\left|x-y\right|^{d+\alpha}} \, dx} \, f_t^2(y) \, dy} \\ 
&\leq  C \int_{\mathbb{R}^d} {\int_{\left|x-y\right|\leq 1} \frac{|x-y|^2 \sup_{\bar{B}(y,1)}  |\nabla m_0|^2 }{\left|x-y\right|^{d+\alpha}} \, dx} \, f_t^2(y) \, dy \\
&\leq  C \int_{\mathbb{R}^d} {\int_{\left|z\right|\leq 1} {\frac{1}{\left|z\right|^{d+\alpha-2}} \, dz} \, f_t^2(y) \, m_0^2(y) \, dy} \\ 
&\leq C \int_{\mathbb{R}^d} f_t^2 \, m_0^2.
\end{aligned}
\eeqn
Gathering~(\ref{eq:nash1}),~(\ref{eq:nash2}) et~(\ref{eq:nash3}), we obtain:
\beqn  \label{ineq0}
\begin{aligned}
&\int_{\mathbb{R}^d} {\int_{\left|x-y\right|\leq 1} {\frac{\left|f_t(x) - f_t(y)\right|^2}{\left|x-y\right|^{d+\alpha}} \, dy} \, m_0^2(x) \, dx} \\
&\geq C \left(\int_{\mathbb{R}^d} {\left|f_t\right|^2 m_0^2}\right)^{\frac{d+\alpha}{d}} \, \left(\int_{\mathbb{R}^d} {\left|f_t\right| m_0} \right) ^{-\frac{2 \alpha}{d}} 
- C' \left(\int_{\mathbb{R}^d} {f_t^2 m_0^2}\right),
\end{aligned} 
\eeqn
for some constants $C$, $C'>0$.
We introduce the following notations:
$$
X(t) := \left\|f_t\right\|_{L^2(m_0)}^2 \text{ et } Y(t) := \left\|f_t\right\|_{L^1(m_0)}.
$$

On the one hand, if $X_0 \leq \left(2C'/C\right)^{d/\alpha} Y_0^2$, because of estimate~(\ref{eq:LpLp}), we have: \linebreak $\forall \, t \geq 0, \, X(t)^{1/2} \leq C e^{bt} X_0^{1/2}$. We hence obtain
$$
\forall \, t \geq 0, \quad X(t)^{1/2} \leq C e^{bt} Y_0.
$$

On the other hand, we treat the case $X_0> \left(2C'/C\right)^{d/\alpha} Y_0^2$. By the previous step (\ref{ineq0}), we end up with the differential inequality
\begin{equation}
\label{ineq1}
\frac{d}{dt} X(t) \leq - C \, Y(t)^{-\frac{2 \alpha}{d}} X(t)^{1+\frac{\alpha}{d}} + C' \, X(t)\,. 
\end{equation}
We also have from estimate~(\ref{eq:LpLp}): $ Y(t) \leq Ce^{bt} Y(0)$ for any $t \geq0$. So, we obtain for any  $t \in [0,1]$, $Y(t) \leq C \, Y(0)$ changing the value of $C$. Putting this together with~(\ref{ineq1}), we obtain:
\begin{equation}
\label{ineq2} 
\forall \,t \in [0,1], \quad \frac{d}{dt} X(t) \leq - C \, Y_0^{-\frac{2 \alpha}{d}} X(t)^{1+\frac{\alpha}{d}} + C' \, X(t).
\end{equation}

Let us introduce $\tau := \sup \left\{t \in [0,1] \, : \, X(s) \geq \left(2C'/C\right)^{d/\alpha} Y_0^2, \, \, \forall \, s \in [0,t] \right\}$. For any \linebreak $t \in \, ]0,\tau[$, we have $-1/2 \, C \, X(t) ^{1+\alpha/d} Y_0^{-2\alpha /d} \leq -C' X(t)$. Then, using~(\ref{ineq2}), we obtain: 
$$ 
\forall \, t \in \, (0,\tau), \quad \frac{d}{dt} X(t) \leq - \frac{1}{2} C \, Y_0^{-\frac{2 \alpha}{d}} X(t)^{1+\frac{\alpha}{d}},
$$
which finally implies
\begin{equation} 
\label{ineg1}
\forall \, t \in (0, \tau), \quad X(t) \leq \left(\frac{\alpha}{d} \frac{C}{2} Y_0 ^{-\frac{2\alpha}{d}} t \right)^{-\frac{d}{\alpha}}.
\end{equation}
Moreover, because of estimate~(\ref{eq:LpLp}), we get:
\beqn \label{ineg2}
\forall \, t \in \, [\tau,+\infty), \quad  X(t)^{1/2} \leq C e^{b(t-\tau)} X(\tau)^{1/2}  \leq C  e^{b(t-\tau)} \left(\frac{2C'}{C}\right)^{\frac{d}{2\alpha}} Y_0. 
\eeqn
Therefore, gathering inequalities~(\ref{ineg1}) an~(\ref{ineg2}), we obtain:
$$
\forall \, t >0, \quad X(t)^{\frac{1}{2}} \leq C \, t^{-\frac{d}{2\alpha}} \, e^{b t} \, Y_0.
$$

As a conclusion, we have
$$
\forall \, t>0, \quad \left\|e^{\BB t} \, f\right\|_{L^2(m_0)} \leq C \, e^{bt}{t^{-\frac{d}{2\alpha}}} \left\|f\right\|_{L^1(m_0)}. 
$$
which means that the operator $e^{\BB t}$ is continuous from $L^1(m_0)$ into $L^2(m_0)$. 

\medskip
Let us now consider $p$ and $q$, $1 \leq p \leq q$, $e^{\BB t}$ is continuous from $L^p(m_0)$ into $L^q(m_0)$ using Riesz-Thorin interpolation Theorem. 
%First we use that $e^{\BB t}$ is continuous from $L^1(m_0)$ into $L^2(m_0)$ and from $L^2(m_0)$ into $L^2(m_0)$. It implies that  $e^{\BB t}$ is continuous from $L^{p'}(m_0)$ into $L^2(m_0)$ for any $p' \in  [p,2]$. 
%We denote:
%$$
%\theta = \frac{2}{q} - 1 \text{  et  } p' = \frac{2p(q-1)}{q-p(2-q)} 
%$$
%which implies
%$$
%\frac{1-\theta}{2} + \theta = \frac{1}{q} \text{  et  } \frac{(1-\theta)}{p'} + \theta = \frac{1}{p}.
%$$
%We again use Riesz-Thorin interpolation Theorem and we obtain that $e^{\BB t}$ is continuous from $L^p(m_0)$ into $L^q(m_0)$. 
Moreover, if we denote $C_{ab}$ the norm of \linebreak$e^{\BB t} : L^a(m_0) \rightarrow L^b(m_0)$, we get the following estimate:
$$ 
C_{pq} \leq C_{22}^{2-2/p} \, C_{11}^{2/q - 1} \, C_{12}^{2/p-2/q} 
$$
and
\begin{align*}
C_{22}^{2-2/p} \, C_{11}^{2/q - 1} \, C_{12}^{2/p-2/q} &= C \, e^{bt(2-2/p)} \, e^{bt(2/q-1)} \, e^{bt(2/p-2/q)} \, t^{-d/(2\alpha)\, (2/p-2/q)} \\
&= \frac{Ce^{bt}}{t^{\frac{d}{\alpha}\left(\frac{1}{p} - \frac{1}{q}\right)}},
\end{align*}
which yields the result. 
\end{proof}

Using the same method as in~\cite{GMM}, we can deduce the following corollary:
\begin{cor}
\label{cor:regular}
There exists a constant $C$ such that for any $p$ and $q$, $1 \leq p \leq q \leq  2$, we have:
$$
\forall \, t \geq 0, \quad \|T_{\ell_0}(t)f\|_{L^q(m_0)} \leq C \, \frac{t^{\ell_0-1}e^{bt}}{t^{\frac{d}{\alpha}\left(\frac{1}{p} - \frac{1}{q}\right)}} \|f\|_{L^p(m_0)}
$$
where $\ell_0 = E\left[\frac{d}{\alpha}\left(\frac{1}{p} - \frac{1}{q}\right)\right]+1$.
\end{cor}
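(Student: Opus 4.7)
The strategy is to unfold the $\ell_0$-fold convolution defining $T_{\ell_0}$ and iterate the one-step regularization estimate of Lemma~\ref{lem:regular} along a chain of intermediate Lebesgue exponents between $p$ and $q$. Writing the convolution out explicitly,
$$
T_{\ell_0}(t) \;=\; \int_{\mathcal{S}_{\ell_0}(t)} \AA \, e^{\BB s_1}\,\AA \, e^{\BB s_2} \cdots \AA \, e^{\BB s_{\ell_0}} \, ds_1 \cdots ds_{\ell_0 - 1},
$$
with $s_{\ell_0} := t - s_1 - \cdots - s_{\ell_0 - 1}$ and $\mathcal{S}_{\ell_0}(t) := \{(s_1, \ldots, s_{\ell_0 - 1}) \in (\R_+)^{\ell_0 - 1} : s_1 + \cdots + s_{\ell_0 - 1} \leq t\}$. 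Because $\AA = M \chi_R$ is multiplication by a bounded compactly supported function with $\chi_R \leq 1$, it acts as a bounded operator on every $L^r(m_0)$, $r \in [1,2]$, with operator norm at most $M$ uniformly in $r$. So the entire gain of integrability must come from the factors $e^{\BB s_i}$.

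Next, I interpolate the exponents by setting $1/p_i := 1/p - (i/\ell_0)(1/p - 1/q)$ for $i = 0, 1, \ldots, \ell_0$, so that $p_0 = p$, $p_{\ell_0} = q$ and every $p_i \in [1,2]$. The one-step exponent appearing in Lemma~\ref{lem:regular} at stage $i$ is then
$$
\gamma_i \;:=\; \frac{d}{\alpha}\left(\frac{1}{p_{i-1}} - \frac{1}{p_i}\right) \;=\; \frac{1}{\ell_0} \cdot \frac{d}{\alpha}\left(\frac{1}{p} - \frac{1}{q}\right),
$$
and the defining choice $\ell_0 = E\!\left[\frac{d}{\alpha}(1/p - 1/q)\right] + 1$ forces $\gamma_i < 1$ for each $i$, while $\sum_{i=1}^{\ell_0} \gamma_i = \frac{d}{\alpha}(1/p - 1/q)$. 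Chaining Lemma~\ref{lem:regular} along these $\ell_0$ steps and absorbing the $\ell_0$ factors of $M$ into the constant, the integrand in $T_{\ell_0}(t) f$ obeys
$$
\|\AA e^{\BB s_1} \cdots \AA e^{\BB s_{\ell_0}} f\|_{L^q(m_0)} \;\leq\; C \prod_{i=1}^{\ell_0} \frac{e^{b s_i}}{s_i^{\gamma_i}} \, \|f\|_{L^p(m_0)} \;=\; \frac{C \, e^{bt}}{\prod_i s_i^{\gamma_i}} \, \|f\|_{L^p(m_0)},
$$
using $\sum s_i = t$.

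It then remains to integrate $\prod_i s_i^{-\gamma_i}$ over $\mathcal{S}_{\ell_0}(t)$, which is a standard Dirichlet integral and equals $t^{\ell_0 - 1 - \sum_i \gamma_i}$ up to an explicit Beta-function constant, finite precisely because each $\gamma_i < 1$. Substituting $\sum \gamma_i = (d/\alpha)(1/p - 1/q)$ yields the claimed bound $\|T_{\ell_0}(t) f\|_{L^q(m_0)} \leq C \, t^{\ell_0 - 1} e^{bt} \cdot t^{-(d/\alpha)(1/p - 1/q)} \|f\|_{L^p(m_0)}$. The only substantive obstacle is ensuring each step exponent satisfies $\gamma_i < 1$ so that the Dirichlet integral converges; this is exactly what the choice of $\ell_0$ as the ceiling of $(d/\alpha)(1/p - 1/q)$ (together with the equidistributed interpolation of the $p_i$) is designed to achieve. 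Everything else is a mechanical application of Lemma~\ref{lem:regular} followed by the explicit simplex integration.
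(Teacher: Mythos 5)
Your argument is correct and is precisely the method the paper itself invokes (it offers no proof beyond the reference to \cite{GMM}): unfold the $\ell_0$-fold convolution over the time simplex, chain Lemma~\ref{lem:regular} along equidistributed exponents $1/p_i$ so that each step costs $\gamma_i=\frac{1}{\ell_0}\frac{d}{\alpha}\bigl(\frac{1}{p}-\frac{1}{q}\bigr)<1$ by the choice of $\ell_0$, use that $\AA=M\chi_R$ is bounded on every $L^r(m_0)$, and evaluate the resulting Dirichlet integral to get $t^{\ell_0-1-\sum_i\gamma_i}$. The one caveat is that your Beta-function constant $\prod_i\Gamma(1-\gamma_i)/\Gamma(\ell_0-\sum_i\gamma_i)$ degenerates as $\frac{d}{\alpha}\bigl(\frac{1}{p}-\frac{1}{q}\bigr)$ approaches an integer from below, so $C$ is not quite uniform over all pairs $(p,q)$ as the corollary literally asserts; this is harmless here since the paper only applies the estimate with $(p,q)=(1,2)$.
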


\medskip
\subsection{Proof of the main result}
As a conclusion, we can now apply Theorem~\ref{th:spectralgap} with $E=L^2(\widetilde{\mu}^{-1/2})$ and $\EE = L^1(m)$. Hypothesis (1) comes from Corollary~\ref{cor:entropie}. Hypothesis (2)-(i) and (2)-(ii) come from Lemma~\ref{lem:dissipL1}.
We can also prove that assumption (2)-(iii) is satisfied. 

Indeed, we can check by an immediate computation that we have the following estimate for any function $f$: $ \| \AA f \|_{L^q(m)} \leq C \|\AA f\|_{L^q(m_0)}$.
Moreover, we have that $L^p(m) \subset L^p(m_0)$ with continuous embedding (because $k_0 < k$). Using these two last facts and Corollary~\ref{cor:regular}, we can deduce that for any $p$ and $q$, $1 \leq p \leq q \leq  2$, we have:
\beqn \label{eq:regular}
\forall \, t \geq 0, \quad \|T_{\ell_0}(t)f\|_{L^q(m)} \leq C \, \frac{t^{\ell_0-1}e^{bt}}{t^{\frac{d}{\alpha}\left(\frac{1}{p} - \frac{1}{q}\right)}} \|f\|_{L^p(m)}.
\eeqn
Moreover, we can show that $\| \AA f \|_{L^2(\widetilde{\mu}^{-1/2})} \leq C \|\AA f\|_{L^2(m)}$.
Finally, using the last estimate combined with (\ref{eq:regular}) with $p=1$, $q=2$ and denoting $\gamma := \frac{d}{2\alpha} - E\left(\frac{d}{2\alpha}\right)$,  we obtain:
$$
\|T_{\ell_0}(t)  \|_{L^1(m) \rightarrow L^2(\widetilde{\mu}^{-1/2})} \leq C \frac{e^{bt}}{t^\gamma},
$$
with $\gamma \in [0,1)$, which implies that (2)-(iii) is fulfilled. 

We can conclude that Theorem~\ref{th:main} holds. 

\begin{rem}
To obtain a similar result as Theorem~\ref{th:main} in $L^p(\langle x \rangle^k)$ with $p \in (1,2]$, we need a very restrictive assumption: $ d\,(1-1/p)<k<\alpha$. Indeed, it implies that the limit at infinity of $\psi_{m,p}$ is negative, which allows us to get the dissipativity of $\BB-a$ in $L^p(\langle x \rangle^k)$ for any $a>d\, (1-1/p)-k$. The rest of the proof can be done in the same way.
\end{rem}

\newpage
\bibliographystyle{acm}
%\bibliography{BiblioLF}

\end{document}